\setlist[enumerate,1]{label=(\roman*)}  % 1st layer (i)
\newtheorem{theorem}{Theorem}
\newtheorem{remark}[theorem]{Remark}
\newtheorem{lemma}[theorem]{Lemma}
\newtheorem{corollary}[theorem]{Corollary}
\newtheorem{definition}[theorem]{Definition}
\DeclareMathOperator{\E}{\mathds E}	        % Expectation
\DeclareMathOperator*{\divergenz}{div}              %
\DeclareMathOperator*{\esssup}{ess\,sup}
\newcommand{\R}{\mathbb{R}}
\newcommand{\RN}{\mathbb{R}^N}
\newcommand{\hu}{\hat{u}}
\newcommand{\halpha}{\hat{\alpha}}
\newcommand{\hbeta}{\hat{\beta}}
\newcommand{\hV}{\hat{V}}
\newcommand{\ha}{\hat{a}}
\newcommand{\hb}{\hat{b}}
\newcommand{\Gammain}{\Gamma_{\text{in}}}
\newcommand{\Gammaout}{\Gamma_{\text{out}}}
\newcommand{\F}{\mathcal{F}}
\newcommand{\B}{\mathcal{B}}
\newcommand{\eps}{\varepsilon}
\newcommand{\Om}{D}
\newcommand{\rand}{\partial D}
\newcommand{\into}{\int_{D}}
\newcommand{\intor}{\int_{\partial\Omega}}
\newcommand{\close}{\overline{\Omega}}
\renewcommand{\l}{\left}
\renewcommand{\r}{\right}
\numberwithin{theorem}{section}
\numberwithin{equation}{section}
\newcommand{\poi}{\Phi}
\newcommand{\W}{d}
\newcommand{\jan}[1]{\todo[inline,color=green!40]{Jan: #1}}
\title{Uncertainty Analysis for Drift-Diffusion Equations}
\author{
	Greta Marino%
	\thanks{University of Technology, Chemnitz, Germany.}\,~\footnote{Corresponding author: \href{mailto:greta.marino@mathematik.tu-chemnitz.de}{greta.marino@mathematik.tu-chemnitz.de}}
	\and
	Jan-Frederik Pietschmann\footnotemark[1]
	\and
	Alois Pichler\footnotemark[1]\,~\thanks{Funded by Deutsche Forschungsgemeinschaft (DFG, German Research Foundation)~-- Project-ID 416228727~-- SFB~1410.}}
\begin{document}
	\maketitle
	\begin{abstract}
		We study evolution equations of drift-diffusion type when various parameters are random. Motivated by applications in pedestrian dynamics, we focus on the case when the total mass is, due to boundary or reaction terms, not conserved.
		After providing existence and stability for the deterministic problem, we consider uncertainty in the data. Instead of a sensitivity analysis we propose to measure functionals of the solution, so-called quantities of interest (QoI), by involving scalarizing statistics.
		For these summarizing statistics we provide probabilistic continuity results.
	\end{abstract}
	\maketitle

%			Introduction
%	╰───────────────────────────────────
\section{Introduction}
	Evolution equations describe a variety of systems in various scientific areas. We focus on non-linear drift-diffusion equations that describe the evolution of an unknown density $u=u(t,x)$. A classical example is the linear Fokker--Planck equation in a bounded domain $D \subset \RN$, $N= 1, 2, 3$, which reads as 
	\begin{align*}
		\partial_t u+ \nabla \cdot (-\nabla u+ u \nabla V) = 0 \text{ in } (0,T) \times D,
	\end{align*}
where $T>0$ is a final time and $V= V(t, x) $ a given external potential. The equation needs to be supplemented with suitable initial- and boundary conditions, and we emphasize that for no flux boundary conditions the mass of the initial data is conserved over time.

Here, we are interested in models in which the mass can change, either due to non-homogeneous boundary- or due to reaction terms. Furthermore, we replace the convection term  $u\nabla V$ by $f(u) \nabla V$ where $f$ is a positive function that degenerates at both $u=0$ and $u=1$. This choice ensures that the solutions satisfy the box constraints $0 \le u \le 1$ a.e. in $(0,T) \times D$ and is often called \emph{volume filling}. Both choices are motivated by applications to pedestrian dynamics where we assume that people may enter or leave the domain and that there is a maximal density (normalized to one) that they can occupy.
%\jan{Hier mehr?}

For concreteness, consider the following two problems. %we outline deterministically first. 
Denoting by 
\[	J \coloneqq -\nabla u+ f(u) \nabla V(t, x)\]
the flux density, the first problem reads 
\begin{equation}\label{problem1}\tag{P1}
	\partial_t u+ \nabla \cdot J = \alpha(t, x)\, f(u)- \beta(t, x)\, u\, \qquad \text{in } (0, T) \times D,
\end{equation}
where $\alpha, \beta\colon (0,T) \times D \to \R_+$ are given functions 
controlling the creation and removal of mass.
%
%supplemented with suitable initial  and boundary conditions, 
%
The second problem is given by
\begin{equation}\label{problem2}\tag{P2}
	\begin{aligned}
		\partial_t u + \nabla \cdot J & = 0 && \text{in }  (0,T) \times D, \\
		- J \cdot n&= a(t, x)\, g(u) && \text{on }  (0,T) \times \Gamma_\text{in}, \\
		J \cdot n&= b(t, x)\, u && \text{on }  (0, T) \times \Gamma_\text{out}, \\
		J \cdot n&= 0 && \text{on }  (0,T) \times \partial D \setminus  (\Gamma_\text{in} \cup \Gamma_\text{out})
	\end{aligned}
\end{equation}
with~$n$ being the outward normal, $a\colon(0,T) \times \Gamma_\text{in} \to \R$, $b\colon(0,T) \times \Gamma_\text{out}\to \R$ and where~$\Gammain$ and~$\Gammaout $ are suitable subsets of $ \partial D$, called in- and outflow parts of the boundary. 
%and $f$, $g$, $\alpha$, $\beta$, $a$, $b$ are smooth functions with properties specified later.

The models~\eqref{problem1} and~\eqref{problem2} appear in various applications, e.g., transport phenomena in biological systems, \cite{humpert_role_2019} or pedestrian dynamics, \cite{GSW}. Here we focus on the latter situation as a guiding example. Then, $u(t,x)$ denotes the (scaled) density of pedestrians at point $x \in D$ at time $t \in (0,T)$ and $u=1$ is the maximal possible density, i.e., the maximal number of people that can be located in a given area. From a practical point of view, it is desirable to avoid very high densities as they can lead to panic situations. Thus, one objective of our work is to study the dependence of these regions on random parameters. In both models, we allow for a change in the total number of pedestrians, either via reaction terms in \eqref{problem1} or via boundary terms in \eqref{problem2}. A prototypical set-up for \eqref{problem2} would be a corridor where people can enter at one side (with rate $\alpha$), walk through and leave at the other side with velocity $\beta$. The function $g(u)$ ensures that entering the domain is only possible if space is available, a typical choice could be $g(u) = 1 - u$. 

Our analysis addresses quantities, which are derived from the entire solution of the differential equations. These functions, which transform the solution function to a single real number, are termed \emph{quantity of interest (QoI)}. 
Important examples include the average pedestrian density within a given area and a specific time interval.
%The QoI naturally correspond to the average density of pedestrians within a given region and within a given time observed, say.

The main result of our work is a \emph{probabilistic} analysis of the QoI on random parameters that appear in the problem.
The boundary conditions to be introduced later as well as the potential reveal over time, driving the evolution of the pedestrian density. Partial observations, as well as the state of the system at later times entail additional information. We employ this additional information for gradually more precise forecasts of the~QoI.

%By assessing the quantity of interest we do not build on exact information available.
To allow for concrete conclusions, we assume that these governing functions driving the evolution of the system realize with unknown outcomes but known law. Summarizing statistics as the expected value aggregate all possible values to a single real number.

Involving the entire law of all possible realizations, however, is expensive.
It is a well-accepted practice in applied economics to replace a possibly complicated measure by a simpler measure, as the quantities of interest depend explicitly on the law. Indeed, the Wasserstein distance provides measurements, which allows assessing these deviations explicitly.

%The application addressed here builds on the evolution of a system. This evolution has a very specific characteristic, is given by the \emph{nature of time}.

For our applications, the information available grows gradually with time. Indeed, at a given instant of time one may observe the realization of the governing functions until now, but the future remains unknown. Waiting an additional time interval will reveal additional information allowing for improved forecasts. %but to know everything one has to wait until the very end of the evolution.

The nested distance relates the known information from the past with the unknown realizations from the future. The nested distance builds on the Wasserstein distance, or is a generalization of the Wasserstein distance, \cite{Ina2021_PhD, RachevRueschendorf}, adapted to the underlying stochastic process.
The nested distance was introduced in \cite{Pflug2009}, see also \cite{PflugPichler2011} in a discrete-time setting,
its topological properties are studied in \cite{BackhoffBeiglbock} and \cite{Acciaio2016} study extensions to continuous time.
This approach will allow us to evaluate the quantities of interest for simple probability measures, which are computationally accessible faster.

%	Control theory, in a random framework, typically involves the expectations in the objective.

%In real world situations, the governing components which drive the solutions are not available with full precision. Thus, we consider various driving parameters as random, namely the functions $\alpha, \beta, a, b$ the potential $V$ but also the initial datum.\\
%
%We focus on functionals of the (then random) solution, called quantity of interest (QoI) 
%Examples could be the total mass at a given time in a subdomain or the size of the subset of the domain $D$ on which the density exceeds a given value, but many more are possible depending on the given application. 

 %To account for this fact, 
 %and, instead of a plain sensitivity analysis, we propose distances, which allow controlling the system throughout and control actions during the system's runtime. \jan{What does that mean?}

%\subsection{Relevant literature}
%
%\subsection{Our contribution}

\jan{TODO: Literatur}

\paragraph{Outline.}
The paper is organized as follows: In Section~\ref{sec:prelim} we introduce the 
precise mathematical setting and notations. Section~\ref{sect3} then discusses 
existence and stability properties of the solutions for the deterministic 
problems with respect to parameters. Then, Section~\ref{sec:random_data} 
addresses the case when parameters become random and shows our main stability 
result. We conclude by a summary in Section~\ref{sec:summary}.

%\begin{equation}\label{problem1}
%	\partial_t u+ \nabla \cdot (-\nabla u+ f(u) \nabla V(t, x))= \alpha(t, x)\, f(u)- \beta(t, x)\, u\, \qquad \text{in } (0, T) \times D,
%\end{equation}
%where $V= V(t, x) $ is a given external potential, $\alpha, \beta : (0,T) \times \Omega \to \R_+$ are given functions controlling the creation and removal of mass.

%*************************************************

\section{Preliminaries}\label{sec:prelim}

Let us first fix the setting: $D \subset \RN$, $N= 1, 2, 3$, denotes an open and 
bounded domain with Lipschitz boundary $\partial D$ and $T > 0$ a fixed final 
time.

Let $r \in [1, \infty)$. We denote by $L^r(D)$, $L^r(D; \RN)$ and $W^{1, r}(D)$ the usual Lebesgue and Sobolev spaces equipped with the norms $\|\cdot\|_{L^r(D)}$ and $\|\cdot\|_{W^{1, r}(D)}$ respectively, given by
\begin{gather}
	\|u\|_{L^r(D)}= \l(\into |u|^r \; dx \r)^{1/r}, \quad \|\nabla u\|_{L^r(D)}= \l(\into |\nabla u|^r \;dx \r)^{1/r} \text{ and} \\
	\|u\|_{W^{1,r}(D)}= \l(\into |\nabla u|^r \;dx \r)^{1/r}+ \l(\into |u|^r \; dx \r)^{1/r}.
\end{gather}
For $r=2$ we write, as usual, $H^1(D)= W^{1, 2}(D)$. Furthermore, we denote by $H^1(D)^*$ the dual space of $H^1(D)$ consisting of all bounded linear functionals $L\colon H^1(D) \to \R$. For $r= \infty$ the norm of $L^{\infty}(D) $ is given by
\[	\|u\|_{L^\infty(D)}= \esssup_D |u|.	\]
On the boundary $\rand$ we use the $(N-1)$-dimensional Hausdorff (surface) measure denoted by~$\sigma$. Then, we define the Lebesgue spaces $L^s(\rand)$ with $1 \le s \le \infty$ and the corresponding norms are given by
	\[
	\|u\|_{L^s(\partial D)}= \l(\int_{\partial D} |u|^s \; d\sigma\r)^{1/s}, \quad 1 \le s< \infty, \quad \|u\|_{L^{\infty}(\rand)}= \esssup_{\rand} |u|.
	\]
It is well known that there exists a unique linear continuous map $\gamma\colon W^{1,r}(D) \to L^{r_*}(\rand)$, called trace map, such that $\gamma(u)= u|_{\rand}$ for all $u \in W^{1,r}(D) \cap C(\close)$, where $r_*$ is the critical exponent on the boundary given by
\[	r_*=
	\begin{cases}
		\frac{(N-1)r}{N-r} & \text{if } r< N, \\
		\infty & \text{if } r \ge N,
	\end{cases}
	\]
see \cite{Adams} for further details. For the sake of notational simplicity we drop the use of the trace map $\gamma$, being understood that all restrictions of the Sobolev functions $u \in W^{1, r}(D)$ on the boundary $\rand$ are defined in the sense of traces. In particular, there exists a constant $c_{\rand}>0$ such that
	\begin{equation}\label{SobolevBound}
		\|u\|_{L^q(\rand)} \le c_{\rand}\, \|u\|_{W^{1, r}(D)}, \quad u \in W^{1,r}(D), \ r \le q \le r_*,
	\end{equation}
see  again \cite{Adams}.

Let $B$ be a Banach space with norm $\|\cdot\|_B$. For every $1 \le r < \infty$ we denote by $L^r((0,T); B)$ the Bochner space of all measurable functions $u\colon [0,T] \to B$ such that
\[
	\|u\|_{L^r((0,T); B)}\coloneqq \l(\int_0^T \|u(t)\|_B^r \; dt \r)^{1/r}< \infty.
\]
For $r= \infty$ the norm of the corresponding space $L^{\infty}((0, T); B)$ is given by
	\[
	\|u\|_{L^{\infty}((0,T); B)}\coloneqq \esssup_{0 \le t \le T} \|u(t)\|_B.
	\]
Finally, the space $C([0,T]; B)$ contains all continuous functions $u\colon [0,T] \to B$ such that
	\[
	\|u\|_{C([0,T]; B)}\coloneqq \max_{0 \le t \le T} \|u(t)\|_B < \infty.
	\]
We refer to \cite{Evans1998} as a reference for the above spaces. Let us recall the following compact embedding, see \cite{Simon}.
\begin{remark}\label{rem:compact}

It holds that
	\[
	L^{\infty}\big((0,T); H^1(D)\big) \cap L^2\big((0,T); H^1(D)^*\big) \hookrightarrow C\big([0,T]; L^2(D)\big),
	\]
compactly.
\end{remark}
%\jan{Norm and Lebesgue measure have same symbol!}
The norm of $\RN$ is denoted by $|\cdot|$ and $\cdot$ denotes the inner product in $\RN$. For every $a \in \R$ we set $a^{\pm}\coloneqq \max\{\pm a, 0\}$ and for $u \in W^{1, r}(D)$ we define $u^{\pm}(\cdot)\coloneqq u(\cdot)^{\pm}$. It is well known that
	\[
	u^\pm \in W^{1, r}(D), \quad |u|= u^++ u^-, \quad u= u^+- u^-.
	\]
Finally, by $|\cdot|$ we denote the Lebesgue measure on $\RN$ and it will be clear from the context which quantity we refer to. 
%\medskip

Let $(\Omega, \mathcal F, P)$ denote a generic probability space. If $V$ is any complete metric space, we let $\mathcal B(V)$  be the $\sigma$-algebra which is generated by the open subsets of $V$, i.e., the Borel $\sigma$-algebra. In other words, $\mathcal B(V)$ is the smallest $\sigma$-algebra containing all open subsets of $V$.
Furthermore, a $V$-valued random variable in $(\Omega, \mathcal F, P)$ is a mapping $X\colon \Omega \to V$ that is $\mathcal F$-$\mathcal B(V)$-measurable, i.e.,
	\[	I \in \B(V) \Rightarrow X^{-1}(I) \in \F.\]
Let $k \ge 1$. For any separable Hilbert space $H$ we denote by $L^k(\Omega, \F, P; H) $ the Bochner space of all random variables $X\colon \Omega \to H$ such that
	\[
	\|X\|_{L^k(\Omega, \F, P; H)}\coloneqq \l(\int_{\Omega} \|X(\omega)\|_H^k \,P(d\omega) \r)^{\nicefrac1k}< \infty.
	\]
In particular, H\"{o}lder's inequality implies the inclusion
	\[
	L^m(\Omega, \F, P; H) \subset L^k(\Omega, \F, P; H) \quad \forall \, m \ge k.
	\]
If $k=2$, then $L^2(\Omega, \F, P; H) $ is a Hilbert space. Abbreviating we shall write $L^2(\Omega; H)$ instead of $L^2(\Omega, \F, P; H) $.

Finally, let $(\mathcal X, d_{\mathcal X})$ and $(\mathcal Y, d_{\mathcal Y})$ be two metric spaces, and let $T\colon \mathcal X \to \mathcal Y$ be an operator. We say that $T$ is $L_T$-Lipschitz continuous if there exists $L_T>0$ such that
\begin{equation}\label{eq:Lipschitz}
	d_{\mathcal Y}\big(T(x_1), T(x_2)\big) \le L_T\, d_{\mathcal X}(x_1, x_2) \quad \text{for all } x_1, x_2 \in \mathcal X.
\end{equation}

%			Lipschitz continuity
%-------------------------------------------------------------------------
\section{Existence and continuous dependence} \label{sect3}
This section investigates existence of unique weak solutions to both problem 
\eqref{problem1} and \eqref{problem2}. Then, the Lipschitz continuity of the 
solution operators for both problems is shown.

%			Lipschitz continuity
%-------------------------------------------------------------------------
\subsection{Problem (\ref{problem1})}

We assume the following hypotheses.
	\begin{enumerate}[label=($H_\arabic*$), ref=($H_\arabic*$)] % , noitemsep, nolistsep
		\item \label{H1}
		The initial condition $u^0 \in W^{2- 2/p, p}(D)$ for some fixed $2< p< 3$ and satisfies $0 \le u^0 \le 1$.
		
		\item\label{H2} The potential $V \in L^{\infty}\big((0,T); W^{1, \infty}(D)\big)$.
		
		\item\label{H3} The functions $\alpha$, $\beta \in L^{\infty}\big((0,T) \times D\big)$ and there exist $\alpha_0, \, \beta_0> 0$ such that
			\[
			\alpha(t, x) \ge \alpha_0 \quad \text{and} \quad \beta(t, x) \ge \beta_0 \quad \text{ a.e.\ in }  (0,T) \times D.
			\]
		\item\label{H4} The function $f \in W^{1, \infty}(\R)$ with Lipschitz constant $L_f$ and is such that $f(0)= f(1)= 0$ and $f(s) \ge 0 $ for every $0< s< 1$.
	\end{enumerate}

We use the following standard notion of weak solution
\begin{definition}\label{eq:weak_P1}
We say that a function $u \in L^2\big((0,T); H^1(D)\big)$ with $\partial_t u \in L^2\big((0,T); H^1(D)^*\big)$ is a weak solution to equation~\eqref{problem1} supplemented with the boundary condition
	\begin{equation}
	\label{boundary}
	(-\nabla u+ f(u) \nabla V) \cdot n= 0 \quad \text{on }  (0,T) \times \rand
	\end{equation}
if the identity
	\begin{equation} \label{weak-sol1}
	\begin{split}
	&\into \partial_t u \varphi \; dx\\
	& \, \,  - \into (-\nabla u+ f(u) \nabla V) \cdot \nabla \varphi \; dx+ \into \beta(t,x) u \varphi \,dx- \into \alpha(t,x) f(u) \varphi \, dx= 0
	\end{split}
	\end{equation}
holds for all $\varphi \in H^1(D)$ and a.e.\ $0 \le t \le T$.
\end{definition}
The following existence result holds. 

\begin{theorem}
\label{rem:existence1}

Let hypotheses \ref{H1}--\ref{H4} hold. Then there exists a unique weak solution to problem \eqref{problem1} in the sense of Definition \ref{eq:weak_P1}. Moreover, we have $0 \le u \le 1$ a.e. in $(0,T) \times D$ as well as $u \in C([0,T] \times \overline D)$ whenever $p>2$ in \ref{H1} holds.
\end{theorem}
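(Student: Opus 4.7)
My plan is to combine a truncation of the nonlinearity with a Schauder-type fixed point to establish existence, verify the box constraints $0 \le u \le 1$ a posteriori by testing with the negative and positive parts, prove uniqueness by a Gronwall argument, and finish by invoking parabolic $L^p$-regularity for the joint continuity. The principal obstruction is that $f(u)$ enters both the convective flux and the reactive terms, so a direct Galerkin closure of the energy identity is not available; the hypothesis $f(0) = f(1) = 0$ from \ref{H4} is the key that allows the fixed point to be made compatible with the physical bounds.

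First I would extend $f$ to a globally Lipschitz $\tilde f\colon \R \to \R$ with $\tilde f \equiv 0$ outside $[0,1]$, which preserves the Lipschitz constant $L_f$ because $f$ vanishes at both endpoints. For $v \in L^2((0,T); L^2(D))$ I would then define $\Phi(v) = u$ as the unique weak solution in $L^2((0,T); H^1(D))$, with $\partial_t u \in L^2((0,T); H^1(D)^*)$, of the linear parabolic problem
\[
\partial_t u - \Delta u + \nabla\cdot\bigl(\tilde f(v)\,\nabla V\bigr) + \beta\, u = \alpha\,\tilde f(v)
\]
equipped with the initial datum $u^0$ and the no-flux condition \eqref{boundary}. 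Existence and uniqueness for this linear sub-problem are classical under \ref{H2}--\ref{H3}, and testing with $u$ yields uniform bounds in $L^\infty((0,T); L^2(D)) \cap L^2((0,T); H^1(D))$, together with $\partial_t u \in L^2((0,T); H^1(D)^*)$. The Aubin--Lions lemma then makes $\Phi$ compact on $L^2((0,T); L^2(D))$, so Schaefer's fixed point theorem produces $u = \Phi(u)$, a weak solution of the truncated problem.

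Next I would prove the box constraints. Testing the weak formulation of the truncated equation with $\varphi = u^-$ and using that $\tilde f \equiv 0$ on $(-\infty, 0]$ eliminates both the convective and the reactive contributions on $\{u < 0\}$, reducing the weak form to
\[
\tfrac{1}{2}\tfrac{d}{dt}\|u^-\|_{L^2(D)}^2 + \|\nabla u^-\|_{L^2(D)}^2 + \int_D \beta\,(u^-)^2\, dx = 0,
\]
which together with $u^0 \ge 0$ from \ref{H1} forces $u^- \equiv 0$. The analogous test with $(u-1)^+$, exploiting $\tilde f \equiv 0$ on $[1,\infty)$ and the sign inequality $\beta\,u\,(u-1)^+ \ge \beta\bigl((u-1)^+\bigr)^2$, delivers $(u-1)^+ \equiv 0$. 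Hence $0 \le u \le 1$ a.e., $\tilde f(u) = f(u)$, and $u$ solves the original problem. Uniqueness then follows by subtracting two solutions, testing the difference $w := u_1 - u_2$ with itself, and using $|f(u_1) - f(u_2)| \le L_f|w|$ together with $\|\nabla V\|_\infty < \infty$ to obtain $\tfrac{d}{dt}\|w\|_{L^2(D)}^2 \le C\|w\|_{L^2(D)}^2$ with $w(0) = 0$; Gronwall closes the argument.

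For the continuity statement I would rewrite the equation as $\partial_t u - \Delta u = F - \nabla\cdot G$ with $F \coloneqq \alpha f(u) - \beta u$ and $G \coloneqq -f(u)\nabla V$, both bounded in $L^\infty((0,T) \times D)$ by \ref{H2}--\ref{H4} and the $L^\infty$-estimate just obtained. Combined with $u^0 \in W^{2-2/p, p}(D)$ from \ref{H1}, maximal $L^p$-regularity for the Neumann heat semigroup places $u$ in the parabolic Sobolev space $W^{2,1}_p((0,T) \times D)$, and the corresponding parabolic embedding into $C^{\alpha,\alpha/2}([0,T] \times \overline D)$, valid in the dimensions $N \le 3$ under consideration, supplies the claimed joint continuity. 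I expect this last step to be the most technically involved part, since the divergence-form datum $G$ has only $L^\infty$-regularity and the sharp embedding $W^{2,1}_p \hookrightarrow C([0,T] \times \overline D)$ requires $p$ to exceed an explicit dimension-dependent threshold that must be matched with the bound in \ref{H1}.
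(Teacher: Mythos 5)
Your proposal is correct in its essentials and reaches the same conclusion by a genuinely different route. The paper linearizes exactly as you do (freezing $f(\tilde u)$ in the flux and in the source $\alpha f(\tilde u)$, keeping $-\beta u$ implicit), but then runs a \emph{Banach} contraction on a ball in $L^{\infty}((0,T);L^2(D))$ for $T$ small, using the Lipschitz continuity of $f$; this delivers existence and (local) uniqueness in one stroke, whereas you obtain existence by Aubin--Lions compactness plus Schaefer and must supply uniqueness separately by the Gronwall argument --- which you do, and which is essentially the computation of Theorem~\ref{thm:Lip1}, so nothing is lost. For the box constraints the paper keeps the original $f$ and tests with $\eta_{\eps}'(u-1)$, where $\eta_{\eps}$ is a $W^{2,\infty}$ regularization of the positive part, and shows the offending integrals vanish as $\eps\to0$ using $f(1)=0$ (and $f(0)=0$ for the lower bound). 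Your device of first truncating $f$ to $\tilde f\equiv0$ outside $[0,1]$ and then testing with $u^-$ and $(u-1)^+$ is cleaner: it turns the limit argument into a one-line Stampacchia computation, and the extension indeed preserves $L_f$ precisely because $f(0)=f(1)=0$. Both proofs hinge on the same structural fact.

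The one step where your argument has a genuine soft spot is the final regularity claim. You write the equation as $\partial_t u-\Delta u=F-\nabla\cdot G$ with $G=-f(u)\nabla V\in L^{\infty}$ and invoke maximal $L^p$-regularity to land in $W^{2,1}_p$; but maximal regularity requires the right-hand side in $L^p((0,T)\times D)$ as a \emph{function}, and $\nabla\cdot G=f'(u)\nabla u\cdot\nabla V+f(u)\Delta V$ is not controlled in $L^p$ under \ref{H2} (no second derivatives of $V$ are assumed, and $\nabla u$ is only in $L^2$). The continuity $u\in C([0,T]\times\overline D)$ is still reachable, but via De~Giorgi--Nash--Moser H\"older estimates for divergence-form parabolic equations with data $F\in L^{\infty}$ and $G\in L^{\infty}$, not via $W^{2,1}_p\hookrightarrow C$. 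You flag this difficulty yourself; note that the paper is equally non-specific here, asserting only that "the conclusion follows by applying a standard regularity theory," so your proposal is no less complete than the reference proof on this point.
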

The proof follows from a standard application of Banach's fixed point theorem. The bounds $0 \le u \le 1$ are then shown using an appropriate test function with a Gronwall argument. For the convenience of the reader, the proof is shown in Appendix \ref{sec:appendix}.

We next denote by 
	\begin{align}\label{solution1}
	\MoveEqLeft[3] \mathcal S_1\colon L^{\infty}\big((0,T) \times D\big) \times L^{\infty}\big((0,T) \times D\big) \times L^{\infty}\big((0,T); W^{1, \infty}(D)\big) \times L^2(D) \\
	& \qquad \to L^2\big((0, T); H^1(D)\big) \cap L^2\big((0,T); H^1(D)^*\big)
	\end{align}
 the operator that associates to each $(\alpha, \beta, V, u^0) \in L^{\infty}((0,T) \times D) \times L^{\infty}((0,T) \times D) \times L^{\infty}((0,T); W^{1, \infty}(D)) \times L^2(D) $ the solution $u \in L^2((0, T); H^1(D)) \cap L^2((0,T); H^1(D)^*) $ to equation~\eqref{problem1} supplemented with boundary condition~\eqref{boundary}.
Thanks to Theorem \ref{rem:existence1} $\mathcal S_1$ is well-defined. We now show that is it also Lipschitz continuous.
\begin{theorem}[Lipschitz continuity of the solution operator~\(\mathcal S_1\)] \label{thm:Lip1}
	Let $D \subset \RN$ be a bounded domain with Lipschitz boundary $\partial D$ and let hypotheses~\ref{H1}--\ref{H4} be satisfied. Let $u_1$, $u_2$ be two solutions to problem~\eqref{problem1} and~\eqref{boundary} for given data $\alpha_i$, $\beta_i \in L^{\infty}((0,T) \times D), V_i \in L^{\infty}((0,T); W^{1, \infty}(D))$ and initial conditions $u_i^0 \in W^{2-2/p, p}(D)$, $i= 1, 2$. Then, there exist  constants $C, \tilde C>0$ such that
	\begin{align}	\label{ineq}
		\MoveEqLeft[1]\|u_1- u_2\|_{L^2((0,T); H^1(D))}^2 \\
		& \le C e^{\tilde C T} \biggl(\|\alpha_1- \alpha_2\|_{L^{\infty}(D \times (0,T))}^2+ \|\beta_1- \beta_2\|_{L^{\infty}(D \times (0,T))}^2 \\
		& \qquad \qquad + \|V_1- V_2\|_{L^{\infty}((0,T); W^{1, \infty}(D))}^2+  \|u_1^0- u_2^0\|_{L^2(D)}^2 \biggr).\nonumber
	\end{align}
\end{theorem}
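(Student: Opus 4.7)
The plan is to proceed via an energy estimate: subtract the two weak formulations, test against the difference of the solutions, and close the resulting inequality by Gronwall.

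Concretely, I would write $w \coloneqq u_1 - u_2$ and, subtracting the two identities \eqref{weak-sol1} for $u_1$ and $u_2$, obtain a weak equation for $w$ in which the right-hand side collects differences in the data. Choosing the admissible test function $\varphi = w$ in $H^1(D)$, the time derivative contributes $\tfrac{1}{2}\tfrac{d}{dt}\|w\|_{L^2(D)}^2$, while the diffusive part yields the coercive term $\|\nabla w\|_{L^2(D)}^2$ that we will ultimately use to absorb gradient-type remainders. The key structural step is to rewrite each nonlinear or bilinear combination as a ``diagonal'' term plus a ``parameter'' term, namely
\begin{align*}
f(u_1)\nabla V_1 - f(u_2)\nabla V_2 &= \bigl(f(u_1)-f(u_2)\bigr)\nabla V_1 + f(u_2)\bigl(\nabla V_1-\nabla V_2\bigr), \\
\beta_1 u_1 - \beta_2 u_2 &= \beta_1 w + (\beta_1-\beta_2) u_2, \\
\alpha_1 f(u_1) - \alpha_2 f(u_2) &= \alpha_1\bigl(f(u_1)-f(u_2)\bigr) + (\alpha_1-\alpha_2) f(u_2).
\end{align*}

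Next, I would exploit three ingredients: the Lipschitz continuity of $f$ from \ref{H4} (so $|f(u_1)-f(u_2)| \le L_f |w|$ and $|f(u_i)|\le L_f$), the uniform bound $0\le u_i \le 1$ a.e.\ from Theorem~\ref{rem:existence1}, and the $L^\infty$ assumptions \ref{H2}--\ref{H3} on the data. Combined with Cauchy--Schwarz and Young's inequality $|ab| \le \tfrac{\epsilon}{2}a^2 + \tfrac{1}{2\epsilon} b^2$, each of the six resulting integrals splits into a piece that can be absorbed by $\tfrac12\|\nabla w\|_{L^2}^2$ on the left (choosing $\epsilon$ small enough, using that $\|\nabla V_i\|_\infty$ is bounded uniformly in $t$) and a piece that is either $C\|w\|_{L^2}^2$ or a constant multiple of $\|\alpha_1-\alpha_2\|_\infty^2 + \|\beta_1-\beta_2\|_\infty^2 + \|V_1-V_2\|_{L^\infty(W^{1,\infty})}^2$ (up to a factor $|D|$ coming from integrating the $L^\infty$ data differences). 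This produces a differential inequality of the form
\begin{equation*}
\tfrac{d}{dt}\|w\|_{L^2(D)}^2 + \|\nabla w\|_{L^2(D)}^2 \le \tilde C\,\|w\|_{L^2(D)}^2 + C\,\Delta^2,
\end{equation*}
where $\Delta^2$ collects the data differences squared.

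Finally, I would apply Gronwall's lemma in integral form to deduce $\|w(t)\|_{L^2(D)}^2 \le e^{\tilde C t}(\|u_1^0-u_2^0\|_{L^2(D)}^2 + C T \Delta^2)$ for every $t \in [0,T]$, and then integrate the differential inequality once more on $(0,T)$ to recover the missing $\int_0^T \|\nabla w\|_{L^2(D)}^2\,dt$ contribution with the same exponential prefactor. Adding the two estimates gives exactly~\eqref{ineq}. The main technical obstacle I expect is the convection term $(f(u_1)-f(u_2))\nabla V_1\cdot\nabla w$: here one must use Lipschitzness of $f$ to pass from $|f(u_1)-f(u_2)|$ to $|w|$ \emph{pointwise} and then apply Young carefully, so that the $\|\nabla w\|_{L^2}^2$ remainder stays strictly smaller than the coercive coefficient on the left; the boundedness of $\nabla V_1$ in $L^\infty$ from \ref{H2} is precisely what makes this possible.
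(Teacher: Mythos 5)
Your proposal is correct and follows essentially the same route as the paper: subtract the equations, test with the difference $w=u_1-u_2$, split each nonlinear/bilinear term into a ``diagonal'' part handled via the Lipschitz bound on $f$ and the box constraint $0\le u_i\le 1$ plus a ``parameter'' part, absorb the gradient remainders into the coercive term with weighted Young, and close with Gronwall before integrating in time to recover the full $L^2((0,T);H^1(D))$ norm. The only (immaterial) difference is which factor carries the data increment in the splittings of $\alpha_1 f(u_1)-\alpha_2 f(u_2)$ and $\beta_1 u_1-\beta_2 u_2$; the paper puts the increment on $f(u_1)$ and $u_1$ respectively, while you put it on $f(u_2)$ and $u_2$, which changes nothing in the estimates.
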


\begin{proof}
	Set $\hu\coloneqq u_1- u_2$ and similarly for $\halpha$, $\hbeta$, and $\hV$. Then we consider equation~\eqref{problem1} for $u_1$ and $u_2$, respectively, and take the difference of the corresponding equations. After straightforward calculations the resulting equation reads as
	\begin{align} \label{eq-1}
		\MoveEqLeft[3]\partial_t \hu- \Delta \hu= -\nabla \cdot \bigl((f(u_1)- f(u_2)) \nabla V_1+ f(u_2) \nabla \hV \bigr)  \\
		& \qquad + \halpha f(u_1)+ \alpha_2 \l(f(u_1)- f(u_2)\r)- \hbeta u_1- \beta_2 \hu.\nonumber
	\end{align}
	Multiplying~\eqref{eq-1} by $\hu$, integrating in $D$, and taking the boundary condition \eqref{boundary} into account gives
	\begin{align}\label{eq-2}
		\MoveEqLeft[12]\frac12 \frac{d}{dt} \int_D \hu^2 \; dx+ \into |\nabla \hu|^2 \;dx= \int_D \bigl((f(u_1)- f(u_2)) \nabla V_1+ f(u_2) \nabla \hV \bigr)  \cdot \nabla \hu \;dx\nonumber \\
		& \qquad + \int_D \l(\halpha f(u_1)+ \alpha_2 \l(f(u_1)- f(u_2)\r)\r) \hu \;dx \\
		& \qquad+ \int_D \big(-\hbeta u_1- \beta_2 \hu\big) \hu \; dx.\nonumber
	\end{align}
	We estimate the terms on the right-hand side of~\eqref{eq-2} separately. First, we use the weighted Young's inequality along with  the hypothesis on $f$, see~\ref{H4},  to achieve
		\[
		\begin{split}
		\int_D (f(u_1)- f(u_2)) \nabla V_1 \cdot \nabla \hu \; dx &\le 4 L_f^2 \|\nabla V_1\|_{L^{\infty}(D)}^2 \|\hu\|_{L^2(D)}^2+ \frac14 \|\nabla \hu\|_{L^2(D)}^2
		\end{split}
		\]
	as well as
		\[
		\into f(u_2) \nabla \hV \cdot \nabla \hu  \; dx \le 4 |D| \|f(u_2)\|_{L^{\infty}(D)}^2 \|\nabla \hV\|_{L^{\infty}(D)}^2+ \frac{1}{4} \|\nabla \hu\|_{L^2(D)}^2.
		\]
	Moreover, we use Young's inequality and hypothesis~\ref{H3} to have
		\[
		\begin{split}
		\int_D \halpha f(u_1) \hu \;dx &\le \frac{1}{2} \into  \halpha^2 f(u_1)^2 \; dx+ \frac{1}{2} \into \hu^2 \; dx \\
		&\le \frac{|D|}{2} \|f(u_1)\|_{L^{\infty}(D)}^2 \|\halpha\|_{L^{\infty}(D)}^2+ \frac{1}{2} \|\hu\|_{L^2(D)}^2,
		\end{split}
		\]
	while a simple calculation gives
		\[
		\int_D \alpha_2 \l(f(u_1)-f(u_2)\r) \hu \;dx \le L_f \|\alpha_2 \|_{L^{\infty}(D)} \|\hu\|_{L^2(D)}^2.
		\]
Finally, we use the fact that $u_1 \le 1$ along with Young's inequality and hypothesis~\ref{H2}  to have
		\[
		\int_D -\hbeta u_1  \hu \;dx \le \frac{|D|}{2}  \|\hbeta\|_{L^{\infty}(D)}^2+ \frac12 \|\hu\|_{L^2(D)}^2,
		\]
	as well as  
		\[
		\int_D -\beta_2 \hu^2 \;dx \le \|\beta_2\|_{L^{\infty}(D)}  \|\hu\|_{L^2(D)}^2.
		\]
	Collecting these estimates, equation~\eqref{eq-2} simplifies to
	\begin{align*}
		&\frac12 \frac{d}{dt} \|\hu\|_{L^2(D)}^2+ \frac12 \| \nabla \hu \|_{L^2(D)}^2  \\
		 & \le \l(4 L_f^2 \|\nabla V_1\|_{L^{\infty}(D)}^2+ 1+ L_f \|\alpha_2\|_{L^{\infty}(D)}^2+ \|\beta_2\|_{L^{\infty}(D)}^2 \r)  \|\hu\|_{L^2(D)}^2 \\
		& \qquad+ \frac{|D|}{2} \|f(u_1)\|_{L^{\infty}(D)}^2 \|\halpha\|_{L^{\infty}(D)}^2+ \frac{|D|}{2}  \|\hbeta\|_{L^{\infty}(D)}^2 + 4 |D| \|f(u_2)\|_{L^{\infty}(D)}^2 \| \nabla \hV\|_{L^{\infty}(D)}^2,
	\end{align*}
that is
	\[
	\begin{split}
	&\frac{d}{dt} \|\hu\|_{L^2(D)}^2+ \|\nabla \hu\|_{L^2(D)}^2 \\
	& \qquad \le B_1 \|\hu\|_{L^2(D)}^2+ B_2 \|\halpha\|_{L^{\infty}(D)}^2+ B_3 \|\hbeta\|_{L^{\infty}(D)}^2+ B_4 \|\hV\|_{W^{1, \infty}(D)}^2,
	\end{split}
	\]
where $B_1= B_1(V_1, \alpha_2, \beta_2, L_f), B_2= B_2(D, \|f\|_{L^\infty(D)}), B_3= B_3(D), B_4= B_4(D, \|f\|_{L^\infty(D)})$. An application of  Gronwall's lemma then yields
	\begin{equation}
	\label{gronwall}
	\begin{split}
	& \|\hu(\cdot \,, t)\|_{L^2(D)}^2+ \int_0^t \|\nabla \hu\|_{L^2(D)}^2 \; ds \\
	& \qquad \le C e^{B_1 T} \l(\|\hV\|_{L^\infty((0,T); W^{1, \infty}(D))}^2+ \|\hbeta\|_{L^{\infty}((0,T) \times D)}^2+ \|\halpha\|_{L^{\infty}((0,T) \times D)}^2+ \|\hu^0\|_{L^2(D)}^2 \r),
	\end{split}
	\end{equation}
for every $t \in [0, T]$, where the constant $C>0$ depends on $T$ but not on the difference $u_1- u_2$. Note that $\hat u^0 \in L^2(D)$ by H\"{o}lder's inequality.
This fact together with Remark \ref{rem:compact} implies 
	\begin{equation}
	\label{ineq-gronw}
	\begin{split}
	& \max_{t \in [0,T]} \|\hu\|_{L^2(D)}^2+  \|\nabla \hu\|_{L^2((0,T); L^2(D))}^2  \\
	& \qquad \le C e^{B_1 T} \l(\|\hV\|_{L^\infty((0,T); W^{1, \infty}(D))}^2+ \|\hbeta\|_{L^{\infty}((0,T) \times D)}^2+ \|\halpha\|_{L^{\infty}((0,T) \times D)}^2+ \|\hu^0\|_{L^2(D)}^2 \r).
	\end{split}
	\end{equation}
Finally, it follows that
		\[
		\begin{split}
		& \|\hu\|_{L^2((0,T); H^1(D))}^2 \\
		& \qquad \le C e^{B_1 T} \l(\|\hV\|_{L^{\infty}((0,T); W^{1, \infty}(D))}^2+  \|\hbeta\|_{L^{\infty}((0,T) \times D)}^2+ \|\halpha\|_{L^{\infty}((0,T) \times D)}^2+   \|\hat u^0\|_{L^2(D)}^2 \r),
		\end{split}
		\]
and then the claim.
\end{proof}

\begin{remark} \label{rem:1}
	Taking the definition of the operator $\mathcal S_1$ into account, Theorem~\ref{thm:Lip1} shows that $\mathcal S_1$ is Lipschitz continuous.
\end{remark}

\begin{remark} \label{rem-bound}
	From equation \eqref{gronwall} we in particular infer that
		\[
		\begin{split}
		& \|\hu(t, \cdot) \|_{L^2(D)}^2 \\
		& \qquad \le C e^{B_1 T} \l(\|\hV\|_{L^{2}((0,T); W^{1, \infty}(D))}^2+  \|\hbeta\|_{L^{\infty}((0,T) \times D)}^2+ \|\halpha\|_{L^{\infty}((0,T) \times D)}^2+   \|\hat u^0\|_{L^2(D)}^2 \r)
		\end{split}
		\]
for every $ t \in [0,T]$, which in particular holds for $t= T$.
\end{remark}

\begin{remark}
	It is easy to check that the properties of a norm imply that $u \mapsto \|u\|_{L^2((0,T); L^2(D))} $ is Lipschitz continuous, because for every $u_1$, $u_2 \in L^2((0,T); L^2(D))$  it holds
	\[
	\l| \|u_1\|_{L^2((0,T); L^2(D))}- \|u_2\|_{L^2((0,T); L^2(D))} \r| \le \|u_1- u_2\|_{L^2((0,T); L^2(D))}.
	\]
\end{remark}

\begin{proof}
	Elementary algebraic calculation. 
\end{proof}

%			Second problem
%-------------------------------------------------------------------------
\subsection{Problem (\ref{problem2})}\label{sect4}

In this section we consider problem~\eqref{problem2} and make the following 
assumptions.
	\begin{enumerate}[label=($H_\arabic*^\prime$), ref=($H_\arabic*^\prime$)] % , noitemsep, nolistsep
		\item\label{H1p} The initial condition $\underline u^0  \in L^2(D)$ and satisfies $0 \le \underline u^0 \le 1$.

		\item\label{H2p} The subsets $\Gamma_{\text{in}}, \Gamma_{\text{out}} \subset \partial D $ of the boundary are open and disjoint. Moreover, $\Gamma_{\text{out}}$ is nonempty.
		
		\item\label{Hf} The function $f \in W^{1, \infty}(\R)$ with Lipschitz constant $L_f$ and is such that $f(0)= f(1)= 0$ and $f(s) \ge 0 $ for every $0< s< 1$.
		
		\item\label{H3p} The function $g \in W^{1, \infty}(\R)$ with Lipschitz constant $L_g$, it is monotonically decreasing and such that $g(0)= 1$ and $g(1)= 0$.
		
		\item\label{H4p} The functions $a \in L^{\infty}((0,T) \times \Gamma_{\text{in}})$ and $b \in L^{\infty}((0,T) \times \Gamma_{\text{out}})$, and there exist $a_0, b_0> 0$ such that
			\[
			a_0 \le a(t, x)\le 1  \quad \text{a.e. in }  (0,T) \times \Gamma_{\text{in}} \quad \text{ and } \quad b_0 \le b(t, x)\le 1 \quad \text{a.e. in }  (0,T) \times \Gamma_{\text{out}}.
			\]
%		Furthermore, it holds $\min\{a, b\}< 1$ and
%			\begin{equation}
%			\label{a-condition}
%			L_g \|a\|_{L^{\infty}(\Gammain)}  \le \frac1{8\, c_{\partial D}^2},
%			\end{equation}
%		as well as
%			\begin{equation}
%			\label{b-condition}
%			\|b\|_{L^{\infty}(\Gamma_{\text{out}})} \le \frac1{8 c_{\partial D}^2}.
%			\end{equation}
		\item\label{H5p} The potential satisfies $V \in L^2((0,T); W^{1, \infty}(D))$ and $\Delta V= 0$.
	\end{enumerate}

\begin{definition}\label{eq:weak_P2}
We say that a function $u \in L^2((0,T); H^1(D)) $ with $\partial_t u \in L^2((0, T); H^1(D)^*)$ is a weak solution to~\eqref{problem2} if
	\begin{equation}
	\label{weak}
	\begin{split}
	& \int_D \partial_t u \varphi \;dx- \int_D(-\nabla u+ f(u) \nabla V) \cdot \nabla \varphi \;dx+ \int_{\Gamma_{\emph{out}}} b(t, x) u \varphi \;d\sigma \\
	& \qquad = \int_{\Gamma_{\emph{in}}} a(t, x) g(u) \varphi \;d\sigma
	\end{split}
	\end{equation}
holds for all $\varphi \in H^1(D)$ and for a.e. $ t \in [0, T]$.
\end{definition}
Combining \cite[Lemma 3.5]{BHP} and \cite[Theorem~1 and Corollary~1]{GSW} we obtain the following existence result. 
\begin{theorem}\label{thm:existence2}
	Let hypotheses~\ref{H1p}--\ref{H5p} hold. Then there exists a unique solution $u \in L^2((0,T); H^1(D)) \cap L^2((0,T); H^1(D)^*)$ such that $0 \le u \le 1$ and $u \in C((0,T) \times \overline D)$.
\end{theorem}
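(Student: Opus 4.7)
The plan is to invoke the cited references by reducing to a linear problem and then restoring the nonlinearity via a fixed-point argument, handling the box constraints separately.

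First, I would freeze the nonlinearities: for $\bar u \in L^2((0,T); L^2(D))$ with $0 \le \bar u \le 1$, consider the linear problem obtained by replacing $f(u)$ by $f(\bar u)$ in the drift and $g(u)$ by $g(\bar u)$ in the inflow condition. Hypotheses \ref{Hf} and \ref{H3p} then make the replaced terms bounded on $(0,T) \times D$ and $(0,T) \times \Gammain$ respectively. A standard Galerkin scheme in $H^1(D)$, combined with energy estimates that test against the approximants and absorb the outflow trace term via \eqref{SobolevBound} (exploiting $b \ge b_0 > 0$), yields a unique weak solution $u_{\bar u} \in L^2((0,T); H^1(D))$ with $\partial_t u_{\bar u} \in L^2((0,T); H^1(D)^*)$; this corresponds to \cite[Lemma~3.5]{BHP}. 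I would then close the loop by applying Schauder's fixed-point theorem to the map $\bar u \mapsto u_{\bar u}$ on the closed convex subset of $C([0,T]; L^2(D))$-functions with values in $[0,1]$, using the compact embedding of Remark~\ref{rem:compact} to obtain the required precompactness of the image. Uniqueness then follows by subtracting two solutions, testing with the difference, and invoking Gronwall's lemma in the spirit of the proof of Theorem~\ref{thm:Lip1}.

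Second, to prove $0 \le u \le 1$ a.e., I would test \eqref{weak} with $\varphi = -u^-$ and with $\varphi = (u-1)^+$. On the set $\{u < 0\}$ the Lipschitz bound $|f(u)| \le L_f\, u^-$ (coming from $f(0) = 0$) controls the drift through a weighted Young inequality; the monotonicity of $g$ gives $g(u) \ge g(0) = 1$, so the $\Gammain$-term contributes with the favorable sign, while the $\Gammaout$-term drops out entirely. A symmetric argument on $\{u > 1\}$ uses $f(1) = 0 = g(1)$ together with $b \ge 0$ to kill both boundary contributions. In both cases Gronwall's inequality, together with the initial bound $0 \le \underline u^0 \le 1$ from \ref{H1p}, forces $u^- \equiv 0$ and $(u-1)^+ \equiv 0$. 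Applying exactly the same argument to the linearized solutions $u_{\bar u}$ verifies that the Schauder map of the first step indeed preserves the convex subset, which is the one delicate point of that step.

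Third, the continuity $u \in C((0,T) \times \overline{D})$ follows from parabolic De Giorgi--Nash--Moser theory: once the box constraint secures that the coefficients of the equation are bounded, Hölder regularity for weak solutions of linear parabolic equations with mixed Robin--Neumann boundary data applies, which is the content of \cite[Theorem~1 and Corollary~1]{GSW}. The main obstacle I anticipate is the careful treatment of the mixed boundary splitting \ref{H2p} throughout both the Galerkin step and the regularity step, since the decomposition $\rand = \overline{\Gammain} \cup \overline{\Gammaout} \cup (\rand \setminus (\Gammain \cup \Gammaout))$ prevents a single uniform boundary treatment and forces piecewise trace estimates, with the interface between $\overline{\Gammain}$ and $\overline{\Gammaout}$ being the most sensitive spot for the Hölder regularity argument.
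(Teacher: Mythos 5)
Your second and third steps are essentially sound: the truncation tests with $\varphi=-u^-$ and $\varphi=(u-1)^+$ do work for the \emph{nonlinear} problem, precisely because on $\{u<0\}$ one has $|f(u)|=|f(u)-f(0)|\le L_f\,u^-$ and $g(u)\ge g(0)$, while on $\{u>1\}$ one has $|f(u)|\le L_f\,(u-1)^+$ and $g(u)\le g(1)=0$. The gap sits in your first step, at exactly the point you flag as ``the one delicate point'': the same argument does \emph{not} transfer to the linearized solutions $u_{\bar u}$. Once you replace $g(u)$ by $g(\bar u)$, the inflow term becomes $\int_{\Gammain}a\,g(\bar u)\,(u_{\bar u}-1)^+\,d\sigma\ge 0$ (since $\bar u\in[0,1]$ and \ref{H3p} force $g(\bar u)\ge0$), i.e.\ it carries the \emph{unfavorable} sign: the frozen inflow keeps injecting mass even where $u_{\bar u}>1$, so the linearized problem need not preserve $u\le 1$ at all. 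Likewise $f(\bar u)$ is merely bounded, not $O\big((u_{\bar u}-1)^+\big)$, on $\{u_{\bar u}>1\}$, so the Young/Gronwall absorption fails for the drift as well. Hence the convex set is not shown to be invariant and Schauder does not close. A second, more basic obstruction: for $\bar u\in C([0,T];L^2(D))$ the boundary datum $g(\bar u)|_{\Gammain}$ is not even defined (no trace), so the linearized problem is ill-posed on the set you iterate over; you would need at least $\bar u\in L^2((0,T);H^1(D))$, and then the compact embedding of Remark~\ref{rem:compact} no longer returns you to that space.

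These two obstructions are exactly why the paper does not argue this way. Its proof of Theorem~\ref{thm:existence2} is a citation of \cite[Lemma~3.5]{BHP} and \cite[Theorem~1 and Corollary~1]{GSW}, and the accompanying remark states explicitly that the underlying argument is a fixed point in \emph{entropy variables} (the boundedness-by-entropy method of \cite{Ansgar_bounded}): one iterates on $w=h'(u)$ for a suitable entropy density $h$, so that $u=(h')^{-1}(w)\in[0,1]$ holds automatically at every stage of the iteration and no invariance of the box constraints under a linearized map ever has to be checked. If you insist on a linearization strategy, the analogue of the paper's Appendix for \eqref{problem1} --- Banach's fixed point on a ball of $L^\infty((0,T);L^2(D))$ \emph{without} imposing the box constraint during the iteration, proving $0\le u\le1$ only for the final fixed point --- is the natural fallback, but for \eqref{problem2} even that stalls on the trace issue for $g(\bar u)$ and on the contraction estimate for $\|g(\bar u_1)-g(\bar u_2)\|_{L^2(\Gammain)}$. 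Your regularity step (De Giorgi--Nash--Moser once boundedness is known) is consistent with what the cited results provide.
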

Different from the proof of Theorem \ref{eq:weak_P1}, this proof is based on a fixed point argument in entropy variables defined as derivative of an appropriate entropy functional, see \cite{Ansgar_bounded} for details.

With this result at hand, we again define a parameter-to-solution operator. More precisely, let
\begin{align}\label{solution2}
	\MoveEqLeft[3]\mathcal S_2 \colon L^{\infty}((0,T) \times \Gammain) \times L^{\infty}((0,T) \times \Gammaout) \times L^2((0,T); W^{1, \infty}(D)) \times L^2(D) \\
	& \qquad \to L^2((0,T); H^1(D)) \cap L^2((0,T); H^1(D)^*)
\end{align}
be the operator that maps $(a, b, V, \underline u^0) \in L^{\infty}((0,T) \times \Gammain) \times L^{\infty}((0,T) \times \Gammaout) \times L^2((0,T); W^{1, \infty}(D)) \times L^2(D) $ to the solution to~\eqref{problem2}. Thanks to Theorem \ref{thm:existence2} we see that  it is well-defined.

The next result states the Lipschitz continuity of the operator $\mathcal S_2$.
\begin{theorem}[Lipschitz continuity of the solution operator \(\mathcal S_2\)]\label{thm:Lip2}
	Let $D \subset \R^N$, $N= 1, 2, 3$, be a bounded domain with Lipschitz boundary $\partial D$, and let hypotheses~\ref{H1p}--\ref{H5p} be satisfied. Moreover, let $u_1, u_2$ be two weak solutions to~\eqref{problem2} in the sense of~\eqref{weak} for given data $a_i, b_i, V_i$ and  initial conditions $u_i^0$, $i=1, 2$.
	Then, there exist constants $C, \tilde C>0$, independent of the difference $u_1- u_2$, such that 
	\begin{align}
	\MoveEqLeft[3]\|u_1- u_2\|_{L^2((0,T); H^1(\Om))}^2 \\
	&  \le C e^{\tilde C T} \biggl( \|a_1- a_2\|_{L^{\infty}((0,T) \times \Gamma_{\emph{in}})}^2+  \|b_1- b_2 \|_{L^{\infty}((0,T) \times \Gamma_{\emph{out}})}^2\\
	& \qquad +  \|V_1- V_2\|_{L^2((0,T); W^{1, \infty}(D))}^2+  \|\underline u_1^0- \underline u_2^0\|_{L^2(D)}^2 \biggr).
	\end{align}
\end{theorem}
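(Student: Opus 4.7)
My plan is to mirror the proof of Theorem~\ref{thm:Lip1}, with the new technical ingredient being the treatment of the boundary integrals. Setting $\hu \coloneqq u_1- u_2$ and analogously $\ha$, $\hb$, $\hV$, $\hat u^0$, I would subtract the weak formulations~\eqref{weak} for $u_1$ and $u_2$, test with $\varphi= \hu$, and use the elementary decompositions $f(u_1) \nabla V_1- f(u_2) \nabla V_2= (f(u_1)- f(u_2)) \nabla V_1+ f(u_2) \nabla \hV$, $b_1 u_1- b_2 u_2= \hb\, u_1+ b_2 \hu$, and $a_1 g(u_1)- a_2 g(u_2)= \ha\, g(u_1)+ a_2 \bigl(g(u_1)- g(u_2)\bigr)$. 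This yields the identity
\begin{align*}
	\MoveEqLeft[3] \frac12 \frac{d}{dt} \|\hu\|_{L^2(D)}^2+ \|\nabla \hu\|_{L^2(D)}^2+ \int_{\Gammaout} b_2 \hu^2 \, d\sigma \\
	&= \into \bigl((f(u_1)- f(u_2)) \nabla V_1+ f(u_2) \nabla \hV\bigr) \cdot \nabla \hu \, dx \\
	&\quad- \int_{\Gammaout} \hb\, u_1 \hu \, d\sigma+ \int_{\Gammain} \bigl(\ha\, g(u_1)+ a_2 (g(u_1)- g(u_2))\bigr) \hu \, d\sigma.
\end{align*}
The third term on the left-hand side is nonnegative by~\ref{H4p} and may simply be dropped.

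The volume integral on the right is handled verbatim as in the proof of Theorem~\ref{thm:Lip1}: the bounds $0 \le u_i \le 1$, the Lipschitz continuity of $f$, and weighted Young's inequality produce contributions bounded by $C\, \|\nabla V_1(t)\|_{L^\infty(D)}^2\, \|\hu\|_{L^2(D)}^2+ C\, \|\nabla \hV\|_{L^\infty(D)}^2+ \eps\, \|\nabla \hu\|_{L^2(D)}^2$ with $\eps$ arbitrarily small. For the two boundary integrals my main tool is the Ehrling-type trace estimate
\begin{equation*}
	\|\hu\|_{L^2(\partial D)}^2 \le \eps\, \|\nabla \hu\|_{L^2(D)}^2+ C_\eps\, \|\hu\|_{L^2(D)}^2, \qquad \eps> 0,
\end{equation*}
a consequence of~\eqref{SobolevBound} together with the compactness of the trace embedding into $L^2(\rand)$. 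Applying it, combined with $0 \le u_1 \le 1$, the Lipschitz constant $L_g$ of $g$, the bound $\|g\|_{L^\infty}$, hypothesis~\ref{H4p}, and Young's inequality, every boundary contribution splits into a small multiple of $\|\nabla \hu\|_{L^2(D)}^2$ (to be absorbed on the LHS) plus constant multiples of $\|\hu\|_{L^2(D)}^2$, $\|\ha\|_{L^\infty((0,T) \times \Gammain)}^2$, and $\|\hb\|_{L^\infty((0,T) \times \Gammaout)}^2$.

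Choosing $\eps$ small enough to absorb all gradient terms into the LHS produces a differential inequality
\begin{equation*}
	\frac{d}{dt} \|\hu\|_{L^2(D)}^2+ \tfrac12 \|\nabla \hu\|_{L^2(D)}^2 \le A(t)\, \|\hu\|_{L^2(D)}^2+ B_1 \|\ha\|_{L^\infty}^2+ B_2 \|\hb\|_{L^\infty}^2+ B_3 \|\hV\|_{W^{1,\infty}(D)}^2,
\end{equation*}
with $A(t)= C_1+ C_2\, \|\nabla V_1(t)\|_{L^\infty(D)}^2 \in L^1(0,T)$ thanks to~\ref{H5p}. Gronwall's lemma with a time-dependent coefficient, followed by integration in time of the dissipation term along the lines of the passage from~\eqref{gronwall} to~\eqref{ineq-gronw}, then yields the claimed bound on $\|u_1- u_2\|_{L^2((0,T); H^1(D))}^2$, with $\tilde C T$ in the final estimate absorbing the quantity $\int_0^T A(s) \, ds$, which is finite and independent of the differences.

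The main obstacle is the handling of the boundary integrals: a direct application of~\eqref{SobolevBound} with $r= q= 2$ would control $\|\hu\|_{L^2(\partial D)}^2$ by the full $\|\hu\|_{H^1(D)}^2$, and the resulting $\|\nabla \hu\|_{L^2(D)}^2$ contribution could only be absorbed into the coercive LHS under \emph{a priori} smallness of the boundary coefficients $a_i$, $b_i$. The Ehrling refinement circumvents this, at the cost of an $\eps$-dependent constant contributing to $\tilde C$. A minor secondary issue is that $V_1$ is merely $L^2$ in time, so Gronwall must be invoked with a genuinely $t$-dependent (but $L^1$-integrable) coefficient $A(t)$, in contrast to the essentially constant coefficient in Theorem~\ref{thm:Lip1}.
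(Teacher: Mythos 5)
Your proof is correct and follows the same skeleton as the paper's: subtract the weak formulations, test with $\hu=u_1-u_2$, use the identical decompositions of the drift and boundary terms, drop the signed term $\int_{\Gammaout}b\,\hu^2\,d\sigma$, absorb the gradient contributions, and conclude by Gronwall. The one place you genuinely deviate is the treatment of the boundary integrals. The paper does not need the Ehrling-type trace inequality: for the terms that are \emph{linear} in $\hu$ on the boundary (namely $\int_{\Gammaout}\hb\,u\,\hu\,d\sigma$ and $\int_{\Gammain}\ha\,g(u_1)\hu\,d\sigma$), the weighted Young's inequality with weight $\nicefrac{1}{8c_{\rand}^2}$ applied after~\eqref{SobolevBound} already yields $\tfrac18\|\hu\|_{H^1(D)}^2$, which is absorbable; your worry that this ``could only be absorbed under a priori smallness of $a_i$, $b_i$'' is unfounded there, since the coefficient lands on the data factor, not on $\hu$. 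Where your Ehrling refinement does buy something is the remaining quadratic boundary term $\int_{\Gammain}a_2\bigl(g(u_1)-g(u_2)\bigr)\hu\,d\sigma$: bounding it by $L_g\|a_2\|_{L^\infty}\|\hu\|_{L^2(\rand)}^2$ and interpolating is a valid alternative to the paper's argument, which instead uses the \emph{monotonicity} of $g$ from~\ref{H3p} (together with $a_2\ge0$) to discard this term by sign. So your route trades hypothesis~\ref{H3p}'s monotonicity for a compactness argument --- slightly more general, slightly less elementary. Finally, your observation that under~\ref{H5p} the Gronwall coefficient $A(t)=C_1+C_2\|\nabla V_1(t)\|_{L^\infty(D)}^2$ is only $L^1$ in time, so that Gronwall must be applied with $\exp\bigl(\int_0^T A(s)\,ds\bigr)$ rather than $e^{A_1T}$ with a constant $A_1$, is a legitimate point of care that the paper's write-up glosses over; it does not affect the form of the final estimate.
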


\begin{proof}
	By $\hu $ we denote the difference $u_1- u_2$, and assume the same meaning for $\ha, \hb$, $\hat V$, and $\underline \hu^0$. Then we write~\eqref{weak} for $u= u_1$ and $u= u_2$ and take the difference of the respective equations. After simplifications the resulting equation reads as
	\begin{align}\label{eq-result}
		\MoveEqLeft[2]\int_D \partial_t \hu \varphi\, dx+ \int_D \nabla \hu \cdot \nabla \varphi \, dx\nonumber \\
		= &  \into \l( \bigl(f(u_1)- f(u_2)\bigr) \nabla V_1+ f(u_2) \nabla \hat V \r) \cdot \nabla \varphi \, dx \\
		&- \int_{\Gammaout} \l(b_1(t,x) \hu+ \hb(t,x) u_2 \r) \varphi \, d\sigma+ \int_{\Gammain} \l(\ha(t,x) g(u_1)+ a_2(t,x) \l(g(u_1)- g(u_2) \r) \r) \varphi \, d\sigma. \nonumber
	\end{align}
	Choosing $\varphi= \hu$ in~\eqref{eq-result}   gives
		\begin{align}\label{eq-result2}
			\MoveEqLeft[3]\frac12 \frac{d}{dt} \int_D  \hu^2 \, dx+ \int_D |\nabla \hu|^2 \, dx\nonumber \\
			=&  \into \l( \bigl(f(u_1)- f(u_2)\bigr) \nabla V_1+ f(u_2) \nabla \hat V \r) \cdot \nabla \hu \, dx \\
			& - \int_{\Gammaout} \l(b_1(t,x) \hu+ \hb(t,x) u_2 \r) \hu \, d\sigma\\
			&+ \int_{\Gammain} \l(\ha(t,x) g(u_1)+ a_2(t,x) \l(g(u_1)- g(u_2) \r) \r) \hu \, d\sigma. \nonumber
		\end{align}
	We aim to estimate the different terms appearing on the right-hand side of~\eqref{eq-result2} separately. First,  hypotheses~\ref{Hf}--\ref{H5p} together with the weighted Young's inequality give
%		\[
%		\begin{split}
%		\into \hu g(u_1) \nabla V_1 \cdot \nabla \hu \, dx & \le 8 \into \hu^2 g(u_1)^2 |\nabla V_1|^2 \, dx+ \frac18 \into |\nabla \hu|^2 \, dx \\
%		& \le 8 \|g(u_1)\|_{L^{\infty}(D)}^2 \|\nabla V_1\|_{L^{\infty}(D)}^2 \|\hu\|_{L^2(D)}^2+ \frac18 \|\nabla \hu\|_{L^2(D)}^2,
%		\end{split}
%		\]
%	as well as
	\begin{align*}
		\MoveEqLeft[4]\int_D \l(f(u_1)- f(u_2) \r) \nabla V_1 \cdot \nabla \hu \, dx \le \into L_f  \hu \nabla V_1 \cdot \nabla \hu dx \\
		& \le 8 L_g^2 \|\nabla V_1 \|_{L^{\infty}(D)}^2 \|\hu\|_{L^2(D)}^2+ \frac18 \|\nabla \hu\|_{L^2(D)}^2,
	\end{align*}
	and
	\begin{align*}
		\MoveEqLeft[4]\into f(u_2) \nabla \hat V \cdot \nabla \hu \, dx \le 8 \into f(u_2)^2 |\nabla \hat V|^2 \, dx+ \frac18 \into |\nabla \hu|^2 \, dx \\
		& \le 8 |D| \|f(u_2)\|_{L^{\infty}(D)}^2 \|\nabla \hV\|_{L^{\infty}(D)}^2+ \frac{1}{8} \|\nabla \hu\|_{L^2(D)}^2.
		\end{align*}
	Moreover, hypothesis~\ref{H4p} together with the weighted Young's inequality and the Sobolev embedding imply
	\begin{align*}
		\MoveEqLeft[5] -\int_{\Gammaout} \hb(t,x) u_2 \hu d\sigma = 
		-\int_{\Gammaout} \l(c_{\partial D} \hb(t,x) u_2 \r) \l(\frac{1}{c_{\partial D}} \hu \r) \, d\sigma \\
		&  \le 8 c_{\rand}^2 \|\hb\|_{L^{\infty}(\Gammaout)}^2 \|u_2\|_{L^2(\rand)}^2+ \frac{1}{8 c_{\rand}^2} \|\hu\|_{L^2(\rand)}^2 \\
		& \le 8 c_{\rand}^2 \|u_2\|_{L^2(\rand)}^2 \|\hb\|_{L^{\infty}(\Gammaout)}^2 + \frac{1}{8} \|\hu\|_{H^{1}(\Om)}^2,
	\end{align*}
	%Condition~\eqref{b-condition} for $b= b_1$ and the Sobolev embedding on the boundary gives
	while the term $-\int_{\Gammaout} b_1(t,x) \hu^2 d\sigma$ can be neglected having a sign. 
%	\begin{align*}
%		\MoveEqLeft[4]-\int_{\Gammaout} b_1(x) \hu^2 d\sigma \le \|b_1\|_{L^{\infty}(\Gammaout)} \|\hu\|_{L^2(\rand)}^2 \\
%		& \le \|b_1\|_{L^{\infty}(\Gammaout)} c_{\rand}^2 \|\hu\|_{H^{1}(\Om)}^2 \\
%		& \le \frac{1}{8}  \|\hu\|_{H^{1}(\Om)}^2.
%	\end{align*}
	From hypothesis~\ref{H3p} we know that $g(u_1) \in L^{\infty}(D)$. Then from \cite[Proposition~2.4]{MW} it follows that $g(u_1) \in L^{\infty}(\rand)$.
	Therefore, we can apply the weighted Young's inequality together with the Sobolev embedding on the boundary to achieve
	\begin{align*}
		\MoveEqLeft[4] \int_{\Gammain} \ha(t,x) g(u_1) \hu \, d\sigma = \int_{\Gammain} \l(c_{\partial D} \ha(t,x) g(u_1)\r) \l(\frac{1}{c_{\partial D}} \hu\r) \, d\sigma \\
		& \le 8 \int_{\Gammain} c_{\partial D}^2 \ha(t,x)^2 g(u_1)^2 \, d\sigma+ \frac{1}{8 c_{\partial D}^2} \intor \hu^2 \, d\sigma \\
		& \le 8 |\partial D| c_{\partial D}^2 \|g(u_1)\|_{L^{\infty}(\partial D)}^2 \|\ha\|_{L^{\infty}(\Gammain)}^2+ \frac{1}{8} \|\hu\|_{H^{1}(D)}^2.
	\end{align*}
	Finally, the monotonicity of $g$ (hypothesis~\ref{H3p}), together with $a_2 \ge 0$, implies
	\[
	\begin{split}
	\int_{\Gammain} a_2(t,x) \big(g(u_1)- g(u_2)\big) \hu \, d\sigma  &\le 0,
%	
%	\int_{\Gammain} a_2(x) L_g \hu^2 \, d\sigma \\
%		& \le \|a_2\|_{L^{\infty}(\Gammain)} L_g \|\hu\|_{L^2(\partial D)}^2 \\
%		& \le \frac{1}{8} \|\hu\|_{H^{1}(D)}^2.
	\end{split}
	\]
	so that this term can also be neglected.
	Taking into account these estimates, equation~\eqref{eq-result2} simplifies to
%	\jan{Greta: Check constants...}
	\begin{align*}
		\MoveEqLeft[4]\frac{1}{2} \frac{d}{dt} \| \hu\|_{L^2(D)}^2 + \frac{1}{8} \|\nabla \hu\|_{L^2(D)}^2  \\
		\le & \l(8 \|f(u_1)\|_{L^{\infty}(D)}^2 \|\nabla V_1\|_{L^{\infty}(\Om)}^2+ 8 L_f^2 \|\nabla V_1\|_{L^{\infty}(D)}^2+ \frac{1}{4} \r) \|\hu\|_{L^2(\Om)}^2 \\
		& + 8 |D| \|f(u_2)\|_{L^{\infty}(\Om)}^2 \|\hat V\|_{W^{1, \infty}(\Om)}^2 \\
		& + 8 c_{\rand}^2 \|u_2\|_{L^2(\rand)}^2 \|\hb\|_{L^{\infty}(\Gammaout)}^2+ 8 |\rand|  c_{\rand}^2 \|f(u_1)\|_{L^{\infty}(\rand)}^2 \|\ha\|_{L^{\infty}(\Gammain)}^2,
	\end{align*}
that is
	\begin{equation} \label{eq-a}
	\begin{split}
	& \frac{d}{dt} \|\hu\|_{L^2(D)}^2+ \|\nabla \hu\|_{L^2(D)}^2 \\
	& \qquad \le A_1 \|\hu\|_{L^2(D)}^2+ A_2 \|\hat V\|_{W^{1, \infty}(D)}^2+ A_3 \|\hb\|_{L^{\infty}(\Gammaout)}^2+ A_4 \|\ha\|_{L^{\infty}(\Gammain)}^2,
	\end{split}
	\end{equation}
where $A_1= A_1(g(u_1), V_1, L_g), A_2= A_2(D, g(u_2)), A_3= A_3(c_{\rand}, u_2), A_4= A_4(\rand, c_{\rand}, g(u_1))$. 

A Gronwall argument then gives
	\[
	\begin{split}
	& \|\hu(t, \cdot)\|_{L^2(D)}^2+ \int_0^t \|\nabla \hu\|_{L^2(D)}^2 \; dx \\
	& \qquad \le C e^{A_1 T} \l(\|\ha\|_{L^{\infty}((0,T) \times \Gammain)}^2+  \|\hb\|_{L^{\infty}((0,T) \times \Gammaout)}^2+ \|\nabla \hat V\|_{L^2((0,T); L^{\infty}(\Om))}^2 + \|\underline \hu^0\|_{L^2(\Om)}^2\r)
	\end{split}
	\]
for every $t \in [0, T]$ and for a constant $C>0$ that does not depend on the difference $u_1- u_2$. Taking into account that $u \in C((0,T) \times \overline D)$, see Theorem \ref{thm:existence2}, gives
		\begin{align*}
			&\max_{t \in [0,T]} \|\hu\|_{L^2(\Om)}^2+ \|\nabla \hu\|_{L^2((0,T); L^2(\Om))}^2 \\
			&\qquad\le C e^{A_1 T} \l(\|\ha\|_{L^{\infty}((0,T) \times \Gammain)}^2+  \|\hb\|_{L^{\infty}((0,T) \times \Gammaout)}^2+ \|\nabla \hat V\|_{L^2((0,T); L^{\infty}(\Om))}^2+  \|\underline\hu^0\|_{L^2(\Om)}^2 \r).
		\end{align*}
It follows that
		\[
		\begin{split}
		& \|\hu\|_{L^2((0,T); H^1(\Om))}^2 \\
		&  \le C e^{A_1 T}  \l(\|\ha\|_{L^{\infty}((0,T) \times \Gammain)}^2+  \|\hb\|_{L^{\infty}((0,T) \times \Gammaout)}^2+ \|\hat V\|_{L^2((0,T); W^{1,\infty}(\Om))}^2+  \|\underline \hu^0\|_{L^2(\Om)}^2 \r),
		\end{split}
		\]
and then the claim.
\end{proof}

\begin{remark}\label{rem:2}
	The previous theorem implies in particular that the operator $\mathcal S_2$ 
	is Lipschitz continuous.
\end{remark}

\subsection{Quantities of interest}\label{sec:QoI}

As mentioned in the introduction, we are eventually interested in understanding the sensitivity of the distribution of functionals of the solution, quantities of interest (QoI), resulting from different random models for the uncertain coefficients $\alpha$, $\beta$, $V$, $u^0$ or $a$, $b$, $V$, $\underline u^0$. 

More precisely, we look at maps \(\poi\colon\Xi\to \mathbb R\), where 
\begin{equation}\label{eq:Xi}
	\Xi\coloneqq L^2\big((0,T); H^1(\Om)\big) \ \cap\ L^2\big((0,T); H^1(\Om)^*\big).
\end{equation}
Candidates for the quantity of interest include the functions
\begin{align}
	&\poi(u)\coloneqq \int_{D^\prime} u(t,x)\,d x\quad\text{or}\quad
	\poi(u)=\int_{t_1}^{t_2}\int_{D} u(t,x)\,d x,
\end{align}
with $D' \subset D$ and $t_1<t_2\le T$.
In terms of our application of pedestrian motion, they can be interpreted as the amount of pedestrians in the subdomain $D^\prime$, or the total amount in the fixed time windows $[t_1;\,t_2]$.
Note that both examples are Lipschitz continuous which will be crucial later on.

Another interesting example is the number of pedestrians in regions that exceed a given density. The reason is that in these regions, small variations are sufficient to reach a density so high that, e.g., panic may occur. This corresponds to the measure of superlevel sets, i.e.,
\begin{equation}
	\label{super-level}
	\poi(u)\coloneqq |\{x \in D\colon u(t, x)> c\}|,
\end{equation}
where $0 \le c \le 1$ is a given constant. The issue here is that $\poi(u)$ in~\eqref{super-level} is no longer Lipschitz continuous w.r.t.~$u$ which will be an essential assumption for our subsequent analysis. Thus, we introduce a regularized version as follows. First observe that 
\[
	\poi(u)= \int_{\{x \in D: \, u(t, x)>c\}} dx= \into \chi_{\{u(t,x)>c\}}(x)\,  dx.
\]
Now we replace $\chi_{\{u(t,x)>c\}}(x)$ by $\chi_{c}^\eps(u(t,x))$ which is a smooth (and thus Lipschitz continuous) approximation %. %\jan{todoo...}
%Moreover, it is known that there exists a sequence of $L_{\eps}$-Lipschitz 
%functions $\l(\chi^{\eps}(t)\r)_{\eps>0} \subset C^{\infty}(D_{\eps})$ that 
%Tihs alapproximate $\chi_{\{u>c\}}(t)$, that is,
and define 
\begin{align*}
	\poi(u) \coloneqq \into \chi_c^\eps(u(t,x))\,dx.
\end{align*}
\section{Random data}\label{sec:random_data}
The aim of this section is to extend the results of the previous section to the random setting.
A traditional way of solving an evolution equation as exemplified in~\eqref{problem1} is by involving the Feynman--Kac representation.
That is, by introducing a stochastic process \(X_t\), \(t\in(0,T)\), driven by a Brownian motion so that the solution of the differential equation can be given by a conditional expectation %as
% \begin{equation}\label{eq:19}
% 	u(x,t)= \E\left[\left. \int_t^T f(X_s, s) \exp\left(-\int_t^T\beta(X_s)\, e^{-V(X_s,s)} \mathrm ds\right)\right|X_t=x \right]
% \end{equation}
(cf.\ \cite{Karatzas1988}). %Here, the expectation is with respect to the measure of the Wiener process.

%\todo{Alois: was passt nicht?}
A further, classic approach is given by considering data at random. That is, parameters or functions governing the differential equation are chosen randomly so that the solution of the differential equation, as a consequence, is random as well. However, the specific differential equations~\eqref{problem1} and~\eqref{problem2} involve the additional and novel aspect of \emph{time}.
Here, the parameters are typically not revealed at once but gradually over time, which is a distinctive element of the equations~\eqref{problem1} and~\eqref{problem2} compared to other traditional and classical settings just mentioned. It is our goal here to extend \cite[Theorem~17, Theorem~22 and Corollary~23]{ErnstPichlerSprungk2020} to incorporate the new dimension time and the new aspect of gradually unveiled knowledge of parameters. 
%The following sections elaborate on this aspect.

The limited knowledge about uncertain data is modelled probabilistically in the parameters of the differential equation. To elaborate this probabilistic setting we first establish the link to the results of the preceding sections.

Random data are given by considering the function-valued realizations
\begin{align*}
	& \alpha(\cdot \,, \cdot \,, \omega),\ \beta(\cdot \,, \cdot \,, \omega) \in L^{\infty}((0,T) \times D),  && V(\cdot \, , \cdot \,, \omega) \in L^{\infty}((0,T); W^{1, \infty}(\Om)), \\
	& a(\cdot \,, \cdot \,, \omega) \in L^{\infty}((0,T) \times \Gammain), \ && b(\cdot \,, \cdot \,, \omega) \in L^{\infty}((0,T) \times \Gammaout), \\
	& V(\cdot \,, \cdot \,, \omega) \in L^2((0,T); W^{1, \infty}(D)), && u^0(\cdot \,, \omega) \in W^{2-2/p, p}(D),	\\
	& \underline u^0(\cdot \,, \omega) \in L^2(D)
\end{align*}
for $\omega \in \Omega$ with respect to a reference probability space $(\Omega, \mathcal F, P)$. The random data now result in a random solution $u(\cdot \, , \cdot \,, \omega) $ of~\eqref{problem1} and \eqref{problem2}, that is, a random variable $u\colon \Omega \to L^2\big(0,T; H^1(D)\big) \cap L^2\big(0,T; H^1(D)^*\big)$ such that either
\begin{equation}	\label{random-prob1}
	\begin{aligned}
	& \partial_t u(\cdot \,, \cdot \,, \omega)+ \nabla \cdot (-\nabla u(\cdot \,, \cdot \,, \omega)+ f(u(\cdot \,, \cdot \,, \omega)) \nabla V(\cdot \,, \cdot \,, \omega)\\
	& \qquad = \alpha(\cdot \,, \cdot \,, \omega) f(u(\cdot \,, \cdot \,, \omega))- \beta(\cdot \,, \cdot \,, \omega) u(\cdot \,, \cdot \,, \omega)  \quad && \text{in }  (0,T) \times D, \\
	& (-\nabla u(\cdot \,, \cdot \,, \omega)+ f(u(\cdot \,, \cdot \,, \omega)) \nabla V(\cdot \,, \cdot \,, \omega)) \cdot n= 0 && \text{on }  (0,T) \times \rand,
	\end{aligned}
\end{equation}
or
\begin{equation}	\label{random-prob2}
	\begin{aligned}
		\partial_t u(\cdot \, , \cdot \,, \omega)&= \divergenz J(\cdot \,, \cdot \,, \omega)    \quad && \text{in }  (0,T) \times D, \\
		-J(\cdot \,, \cdot \,, \omega) \cdot n&= a(\cdot \,, \cdot \,, \omega) g(u(\cdot \,, \cdot \,, \omega)) && \text{on }  (0,T) \times \Gammain, \\
		J(\cdot \,, \cdot \,, \omega) \cdot n&= b(\cdot \,, \cdot \,, \omega) u(\cdot \,, \cdot \,, \omega) && \text{on }  (0,T) \times \Gammaout, \\
		J(\cdot \,, \cdot \,, \omega) \cdot n&= 0 && \text{on }  (0,T) \times \rand \setminus (\Gammain \cup \Gammaout)
	\end{aligned}
\end{equation}
hold (in a weak sense) $P$~almost surely ($P$\nobreakdash-a.s., for short), 
where $ J(\cdot \,, \cdot \,, \omega)= -\nabla u(\cdot \, , \cdot \, , \omega)+ f(u(\cdot \,, \cdot \,,  \omega)) \nabla V(\cdot \,, \cdot \,, \omega) $.

We say that the stochastic process $\big(u(t)\big)_{t \in [0,T]} $ is a solution to problem~\eqref{random-prob1} if $u \in L^2(0,T; H^1(D)) \cap L^2(0,T; H^1(D)^*)$ \(P\)\nobreakdash-a.s.\ and $\big(u(t)\big)_{t \in [0,T]} $ satisfies the  equation
	\[
	\into \partial_t u \varphi \; dx- \into (-\nabla u+ f(u) \nabla V) \cdot \nabla \varphi \; dx+ \into \beta(t,x) u  \;dx- \into \alpha(t,x) f(u) \varphi \; dx= 0,\quad P\text{-a.s.},
	\]
for all $ \varphi \in H^1(D)$.

We have the following result.
\begin{theorem}\label{thm-random1}
	Let $D \subset \RN$, $N= 1, 2, 3$, be a bounded domain with Lipschitz boundary $\rand$ and let $T> 0$. Suppose that
	\begin{enumerate}[label=(a\textsubscript{\arabic*}), ref=(a\textsubscript{\arabic*})] % , noitemsep, nolistsep
		\item The initial condition satisfies $u^0 \in L^q\big(\Omega; W^{2-2/p, p}(D)\big)$ for some $2< p< 3$ and for all $q \in [1, \infty)$ and the condition $P(\{\omega \in \Omega\colon \, 0\le u^0\le 1\})= 1$.
		
		\item The potential $V \in L^q(\Omega; L^{\infty}((0,T); W^{1, \infty}(D))) $ for all $q \in [1, \infty)$.
		\item The functions $\alpha, \beta \in L^q(\Omega; L^{\infty}(D \times (0,T)))$ for all $q \in [1, \infty)$ and there exist $\alpha_0, \beta_0>0$ such that
			\[
			P(\{\omega \in \Omega\colon \alpha \ge \alpha_0\})= P(\{\omega \in \Omega\colon \beta \ge \beta_0\})= 1.
			\]
		\item The function $f \in W^{1, \infty}(\R)$ with Lipschitz constant $L_f$ and is such that $f(0)=1$ and $f(1)= 0$ and $f(s) \ge 0 $ for every $0< s< 1$.
	\end{enumerate}
	Then, there exists a solution $u \in L^2\big(0,T; H^1(D)\big) \cap L^2\big(0, T; H^1(D)^*\big) $ to problem~\eqref{random-prob1}, $P$\nobreakdash-a.s.
\end{theorem}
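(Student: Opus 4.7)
The strategy is pathwise: hypotheses~(a\textsubscript{1})--(a\textsubscript{4}) are designed to be the probabilistic counterparts of the deterministic hypotheses~\ref{H1}--\ref{H4}, so my plan is to invoke Theorem~\ref{rem:existence1} for each admissible realization and then upgrade the resulting pointwise-in-$\omega$ family to a genuine random variable using the Lipschitz continuity of~$\mathcal{S}_1$ from Theorem~\ref{thm:Lip1}.

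First I would identify the exceptional null set. Since $u^0 \in L^q(\Omega; W^{2-2/p,p}(D))$, $V \in L^q(\Omega; L^{\infty}((0,T); W^{1,\infty}(D)))$ and $\alpha,\beta \in L^q(\Omega; L^{\infty}((0,T)\times D))$, Chebyshev's inequality yields that the norms are $P$-a.s.\ finite, so that realizations lie $P$-a.s.\ in the Banach spaces required by~\ref{H1}--\ref{H3}. Combining these with the two explicit a.s.-constraints $P(\{0\le u^0\le 1\})=1$ and $P(\{\alpha\ge \alpha_0,\ \beta\ge \beta_0\})=1$, there is a set $\Omega_0 \in \mathcal{F}$ with $P(\Omega_0)=1$ on which the quadruple $(\alpha(\omega),\beta(\omega),V(\omega),u^0(\omega))$ meets \emph{all} deterministic hypotheses (hypothesis~\ref{H4} on~$f$ is non-random and is assumed verbatim).

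Next, for each $\omega\in\Omega_0$ I would invoke Theorem~\ref{rem:existence1} to produce the unique weak solution
\[
u(\cdot,\cdot,\omega)\coloneqq \mathcal{S}_1\bigl(\alpha(\cdot,\cdot,\omega),\beta(\cdot,\cdot,\omega),V(\cdot,\cdot,\omega),u^0(\cdot,\omega)\bigr)
\]
to~\eqref{random-prob1}, satisfying $0\le u\le 1$, and extend $u$ to all of $\Omega$ by setting it to zero on $\Omega\setminus\Omega_0$ (this does not affect the $P$-a.s.\ statement). The pathwise PDE~\eqref{random-prob1} then holds by construction on $\Omega_0$, giving the desired $P$-a.s.\ statement.

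The only genuine subtlety is \emph{measurability} of $\omega\mapsto u(\cdot,\cdot,\omega)$ as a map into the separable Bochner space $L^2((0,T);H^1(D))\cap L^2((0,T);H^1(D)^*)$. Here the key input is Theorem~\ref{thm:Lip1}: the deterministic solution operator $\mathcal{S}_1$ is (locally) Lipschitz, hence continuous, on the product Banach space of data. Since the data map $\omega\mapsto(\alpha(\omega),\beta(\omega),V(\omega),u^0(\omega))$ is $\mathcal{F}$-measurable with values in a separable product space by assumption, and the composition of a Borel measurable map with a continuous map is measurable, we obtain $\mathcal{F}$-$\mathcal{B}$-measurability of~$u$. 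The main obstacle I foresee is therefore not the PDE analysis, which is already done in Theorem~\ref{rem:existence1}, but checking separability of the ambient Banach spaces so that Borel measurability of a composition is available; if separability of $L^\infty$-type spaces should fail to be inherited, one should instead verify measurability directly via the approximation argument underlying Theorem~\ref{rem:existence1} (e.g.\ via the Banach fixed-point iteration producing $u$ as a pointwise limit of measurable approximants).
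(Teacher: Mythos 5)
Your proposal is correct and follows essentially the same route as the paper: the paper's proof simply fixes $\omega\in\Omega$ and invokes Theorem~\ref{rem:existence1} pathwise to obtain a solution $P$-a.s. Your additional discussion of the exceptional null set and of measurability of $\omega\mapsto u(\cdot,\cdot,\omega)$ via the Lipschitz continuity of $\mathcal S_1$ goes beyond what the paper records, but it refines rather than replaces the same pathwise argument.
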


\begin{proof}
	Fix $\omega \in \Omega$ and consider the function $u(\cdot \,, \cdot \,, \omega)$. From Theorem \ref{rem:existence1} we know that there exists a solution $u \in L^2(0,T; H^1(D)) \cap L^2(0,T; H^1(D)^*)$ to problem~\eqref{problem1}--\eqref{boundary}, that is, $u$ is a solution to~\eqref{random-prob1}, $P$\nobreakdash-a.s.
\end{proof}

In a very similar way, we say that the stochastic process $\l(u(t)\r)_{t \in [0,T]} $ is a solution to problem~\eqref{random-prob2} if $u \in L^2(0,T; H^1(D)) \cap L^2(0,T; H^{1}(D)^*)$ a.s.\ $ \omega \in \Omega$ and $\l(u(t)\r)_{t \in [0,T]} $ satisfies the  equation
	\begin{equation*}\label{weak-random}
		\int_D \partial_t u\,\varphi \; dx- \int_D (-\nabla u+ f(u)\,\nabla V) \cdot \nabla \varphi \; dx+ \int_{\Gammaout} b\,u\, \varphi \; d\sigma= \int_{\Gammain} a g(u)\, \varphi \; d\sigma
	\end{equation*}
for all $ \varphi \in H^1(D)$, $P$\nobreakdash-a.s. 

Then we can state the following result.
\begin{theorem}\label{thm-random2}
	Let $D \subset \RN$, $N= 1, 2, 3$, be a bounded domain with Lipschitz boundary $\rand$ and let $T> 0$. Suppose that
	\begin{enumerate}[label=(b\textsubscript{\arabic*}), ref=(b\textsubscript{\arabic*})] % , noitemsep, nolistsep
		\item The initial condition  $\underline u^0 \in L^q(\Omega; L^2(D))$ for every $q \in [1, \infty)$ and satisfies
			\[
			P\big(\{\omega \in \Omega\colon 0 \le \underline u^0  \le 1\}\big)= 1.
			\]
		
		\item The subsets $\Gamma_{\emph{in}}, \Gamma_{\emph{out}} \subset \rand$ are open and disjoint. Moreover, $\Gamma_{\emph{out}}$ is nonempty.
		
		\item The function $f \in W^{1, \infty}(\R)$ with Lipschitz constant $L_f$ and is such that $f(0)=1$ and $f(1)= 0$ and $f(s) \ge 0 $ for every $0< s< 1$.
		
		\item The function  $g \in W^{1, \infty}(\R)$ with Lipschitz constant $L_g$, is monotonically decreasing and such that $g(0)=1$ and $g(1)= 0$.
		
		\item The functions $a \in L^q(\Omega; L^{\infty}((0,T) \times \Gamma_{\emph{in}}))$ and $L^q(\Omega; L^{\infty}((0,T) \times \Gamma_{\emph{out}}))$ for all $q \in [1, \infty)$ and there exist $a_0, b_0> 0$ such that
			\[
			P(\{\omega \in \Omega\colon a \ge a_0\})= P(\{\omega \in \Omega\colon b \ge b_0\})= 1.
			\]
%		Furthermore, $P(\{\omega \in \Omega: \, \min\{a, b\}< 1\})= 1$ and
%		it holds
%			\[
%			L_g \|a\|_{L^{\infty}(\Gamma_{\emph{in}})} \le \frac{1}{8\,c_{\rand}^2} \quad \text{as well as} \quad \|b\|_{L^{\infty}(\Gamma_{\emph{out}})} \le \frac{1}{8\,c_{\rand}^2}.
%			\]
		\item The potential satisfies $V \in L^q(\Omega; L^2((0,T); W^{1, \infty}(D)))$ for all $q \in [1, \infty)$ and $\Delta V = 0$, $P$-a.e..
	\end{enumerate}
	Then, there exists a solution $u \in L^2\big(0,T; H^1(D)\big) \cap L^2\big(0,T; H^1(D)^*\big)$ to problem~\eqref{random-prob2}, \(P\)\nobreakdash-a.s.
\end{theorem}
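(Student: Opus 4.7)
The plan is to mimic the pointwise argument used in the proof of Theorem~\ref{thm-random1}: reduce the statement to the deterministic existence result for problem~\eqref{problem2}, namely Theorem~\ref{thm:existence2}, and verify that the probabilistic hypotheses (b\textsubscript{1})--(b\textsubscript{6}) imply the deterministic hypotheses \ref{H1p}--\ref{H5p} $P$-almost surely.

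More concretely, I would first isolate a null set $N \in \mathcal F$ outside of which all the relevant integrability and sign conditions hold simultaneously. Indeed, from (b\textsubscript{1}) we get $\underline u^0(\cdot,\omega) \in L^2(D)$ with $0 \le \underline u^0 \le 1$ for $\omega \notin N_1$ with $P(N_1)=0$; similarly, (b\textsubscript{5}) yields $a(\cdot,\cdot,\omega) \in L^\infty((0,T) \times \Gammain)$, $b(\cdot,\cdot,\omega) \in L^\infty((0,T) \times \Gammaout)$ together with the lower bounds $a \ge a_0$, $b \ge b_0$, off a null set $N_2$; and (b\textsubscript{6}) gives $V(\cdot,\cdot,\omega) \in L^2((0,T); W^{1,\infty}(D))$ with $\Delta V(\cdot,\cdot,\omega) = 0$ off a null set $N_3$. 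Setting $N \coloneqq N_1 \cup N_2 \cup N_3$ and using (b\textsubscript{2})--(b\textsubscript{4}), which are deterministic statements on $\Gammain$, $\Gammaout$, $f$ and $g$, we see that for every $\omega \notin N$ the data $(a(\cdot,\cdot,\omega), b(\cdot,\cdot,\omega), V(\cdot,\cdot,\omega), \underline u^0(\cdot,\omega))$ satisfy exactly the hypotheses \ref{H1p}--\ref{H5p}.

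For each such $\omega$, Theorem~\ref{thm:existence2} then provides a unique weak solution $u(\cdot,\cdot,\omega) \in L^2((0,T); H^1(D)) \cap L^2((0,T); H^1(D)^*)$ to problem~\eqref{problem2} associated with the data frozen at $\omega$, and this is precisely what is meant by a solution to~\eqref{random-prob2} in the $P$-a.s.\ sense defined just before the statement of the theorem. Since $P(N)=0$, the conclusion follows.

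The only step that is not purely a rewriting of the deterministic result is the selection of the null set $N$, and more subtly the measurability of $\omega \mapsto u(\cdot,\cdot,\omega)$ as a random variable with values in $L^2((0,T); H^1(D)) \cap L^2((0,T); H^1(D)^*)$; however, the statement only asserts pointwise ($P$-a.s.) existence, mirroring Theorem~\ref{thm-random1}, so measurability is not required at this stage and the Lipschitz dependence established in Theorem~\ref{thm:Lip2} can be invoked later to transfer measurability of the data to measurability of the solution. I therefore expect no real obstacle in the proof itself; the subtlety is purely bookkeeping of the null sets.
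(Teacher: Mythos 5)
Your proposal is correct and follows exactly the paper's own argument: the paper proves Theorem~\ref{thm-random2} by noting the argument is identical to that of Theorem~\ref{thm-random1}, i.e., fixing $\omega$ and applying the deterministic existence result (Theorem~\ref{thm:existence2}) pointwise. Your version is in fact more careful than the paper's, since you explicitly collect the null sets on which the hypotheses \ref{H1p}--\ref{H5p} may fail and correctly observe that measurability of $\omega \mapsto u(\cdot,\cdot,\omega)$ is not required for the $P$-a.s.\ existence claim.
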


\begin{proof}
	The argument is the same as in Theorem~\ref{thm-random1} and thus we omit the proof.
\end{proof}

\subsection{Random solution}
We understand \emph{uniqueness} of solutions to~\eqref{random-prob1} in the almost sure sense: if $\{u(t, x, \omega)\}$ and $\{v(t, x,  \omega)\}$ are two solutions to problem~\eqref{random-prob1}, then
\[
	P\big(\{\omega \in \Omega\colon u(t, x, \omega)= v(t, x, \omega) \text{ for all } t \in  (0,T)\text{ and } x\in D\}\big)= 1.
\]
In this case $\{u(t, x, \omega)\}$ and $\{v(t, x, \omega)\}$ are \emph{indistinguishable} with respect to~\(P\). A similar argument holds true for the notion of uniqueness of the solution to~\eqref{random-prob2}.

In this setting, the solution
	\[
	\omega \mapsto u(\cdot \,, \cdot \,, \omega)= \mathcal S_1\big(\alpha(\cdot \,, \cdot \,, \omega), \beta(\cdot \,, \cdot \,, \omega), V(\cdot \,, \cdot \,, \omega), u^0(\cdot \,, \omega)\big)
	\]
to~\eqref{random-prob1} and the solution
	\[
	\omega \mapsto u(\cdot \,, \cdot \,, \omega)= \mathcal S_2\big(a(\cdot \,, \cdot\,, \omega), b(\cdot \,, \cdot \,,\omega), V(\cdot \,, \cdot \,,\omega), u^0(\cdot \,, \omega)\big)
	\]
to~\eqref{random-prob2} are random variables taking values in $L^2\big((0,T); H^1(\Om)\big) \cap L^2\big((0,T); H^1(\Om)^*\big)$.

Let us focus on problem~\eqref{random-prob1}. First, we equip the product space $L^{\infty}(D \times (0,T)) \times L^{\infty}(D \times (0,T)) \times L^\infty((0,T); W^{1, \infty}(D)) \times L^2(D)$ with the metric
\begin{align}\label{eq:Metric1}
	\MoveEqLeft[3] d_1\big((\alpha_1, \beta_1, V_1, u_1^0), (\alpha_2, \beta_2, V_2, u_2^0)\big)\coloneqq \\
	& \|\alpha_1- \alpha_2\|_{L^{\infty}((0,T) \times D)}+ \|\beta_1- \beta_2\|_{L^{\infty}((0,T) \times D)} \nonumber\\
	&\qquad+\|V_1- V_2\|_{L^{\infty}((0,T); W^{1, \infty}(D))}+ \|u_1^0- u_2^0\|_{L^2(D)}.\nonumber
\end{align}
From Theorem~\ref{thm:Lip1}, taking into account that the function $t \mapsto t^{\nicefrac12} $ is sublinear, we have%\todo{is $+\|u- u\|$ correct here, it was $\cdot$?}
\begin{align} 
	\MoveEqLeft[3] \|\mathcal S_1(\alpha_1, \beta_1, V_1, u_1^0) - \mathcal S_1(\alpha_2, \beta_2, V_2, u_2^0) \|_{L^2((0,T); H^1(D))} \\ \nonumber
	 & \le C e^{\tilde C\, T/2}\ \Bigl(\|\alpha_1- \alpha_2\|_{L^{\infty}((0,T) \times D)}+ \|\beta_1- \beta_2\|_{L^{\infty}((0,T) \times D)} \\ \nonumber
	 & \qquad\qquad\qquad   + \|V_1- V_2\|_{L^\infty((0,T); W^{1, \infty}(D))} +\|u_1^0- u_2^0\|_{L^2(D)}\Bigr)  \\
	 & \le  \overline C\cdot d_1\l((\alpha_1, \beta_1, V_1, u_1^0), (\alpha_2, \beta_2, V_2, u_2^0)\r), \label{eq:Lip1}
\end{align}
where $\overline C= C\, e^{\tilde C\, T/2}$.

Similarly, let us consider problem~\eqref{random-prob2}. We equip the product space $L^{\infty}((0,T) \times \Gammain) \times L^{\infty}((0,T) \times \Gammaout) \times L^2((0,T); W^{1, \infty}(\Om)) \times L^2(\Om)$ with the metric
\begin{align}\label{eq:Metric2}
	\MoveEqLeft[3] d_2\big((a_1, b_1, V_1, u_1^0), (a_2, b_2, V_2, u_2^0)\big)\coloneqq  \\
	 & \|a_1- a_2\|_{L^{\infty}((0,T) \times \Gammain)}+ \|b_1- b_2\|_{L^{\infty}((0,T) \times \Gammaout)}\nonumber  \\
	 & \qquad+\|V_1- V_2\|_{L^2((0,T); W^{1, \infty}(\Om))}+ \|\underline u_1^0- \underline u_2^0\|_{L^2(\Om)}. \nonumber
\end{align}
Reasoning as before, from Theorem~\ref{thm:Lip2}  we have that
\begin{align}
	\MoveEqLeft[2]\|\mathcal S_2(a_1, b_1, V_1, \underline u_1^0) - \mathcal S_2(a_2, b_2, V_2, \underline u_2^0)\|_{L^2((0,T); H^1(\Om))}\nonumber \\
	& \le   C e^{\tilde C\, T/2} \Big(\|a_1- a_2\|_{L^{\infty}((0,T) \times \Gammain)}+ \|b_1- b_2\|_{L^{\infty}((0,T) \times \Gammaout)} \\
	& \qquad\qquad\qquad + \|V_1- V_2\|_{L^2((0,T); W^{1, \infty}(\Om))} +  \|\underline u_1^0- \underline u_2^0\|_{L^2(\Om)} \Big) \nonumber\\
	& \le  C  e^{\tilde C\, T/2}  d_2\big((a_1, b_1, V_1, \underline u_1^0), (a_2, b_2, V_2, \underline u_2^0)\big).\label{eq:Lip2}
\end{align}
Summarizing, both problems are Lipschitz continuous with respect to the distances introduced. The Lipschitz constant, notably, strictly depends on the time horizon~$T$.

%				Uncertainty quantification
%	╰───────────────────────────────────
\section{Uncertainty quantification}
Having extended to results of Section \ref{sect3} to the random setting in the previous section, we can now tackle the main subject of this work, namely uncertainty quantification for time-dependent drift-diffusion equations.

We will investigate this time dependency by exchanging the measures. Exchanging the measure allows to replace a complicated probability measure by a simpler measure and execute computations for the simpler, perhaps discrete measure only. The continuity results will allow us to relate the original problem with the simpler, approximating problem and to connect them quantitatively.

In contrast to the  space dimensions, the dimension time indeed exhibits a new aspect and must be treated differently than spatial dimensions. Time progresses, revealing additional knowledge gradually, and adapted stochastic processes exactly address this aspect that past realizations are known, while random observations will be revealed in the unknown future.

Before addressing the novel temporal aspect in Section~\ref{sec:Process} below we introduce Wasserstein distances, which constitute the basis for the distance of stochastic processes.
\begin{definition}[Wasserstein distance, \cite{Villani2003}]
	Let $(\Xi, d)$ be Polish and \(P\) (\(\tilde P\), resp.)\ be a probability distribution on~\(\Xi\). The \emph{Wasserstein distance} of the measures~\(P\) and~\(\tilde P\) of order \(r\ge1\) is
	\begin{equation}\label{eq:Wasserstein}
		\W_r(P,\tilde P)^r\coloneqq \inf \iint_{\Xi\times \Xi} d(\xi,\tilde\xi)^r\,\pi(d\xi,d\tilde\xi),
	\end{equation}
	where the infimum is among all probability measures $\pi$ on $\Xi\times\Xi$ with marginals $P$ and $\tilde P$, i.e.,
	\begin{equation}\label{eq:WassersteinCond}
			\pi(A\times \Xi)=P(A)\ \text{ and }\ \pi(\Xi\times B)=\tilde P(B)
	\end{equation}
	for all measurable sets $A$, $B\in \mathcal B(\Xi)$.
\end{definition}
The most fundamental result on the Wasserstein distance is the Kantorovich--Rubinstein Theorem. The theorem states that
\begin{equation}\label{eq:18}
	\big|\E_P f- \E_{\tilde P}f\big|\le L_f\,\W_1(P,\tilde P),	
\end{equation}
where~\(f\colon\Xi\to\mathbb R\) is a \(\mathbb R\)-valued function with Lipschitz constant $L_f$, cf.~\eqref{eq:Lipschitz}; here,~\(\E_P\) (\(\E_{\tilde P}\), resp.)\ is the expectation with respect to the probability measure~\(P\) (\(\tilde P\), resp.), while~$\W_1$ comes from~\eqref{eq:Wasserstein} for $r= 1$.

\subsection{Quantities of interest}
As explained in Section~\ref{sec:QoI}, we are interested in quantities of interest of the random solutions. To involve the Kantorovich-Rubinstein theorem~\eqref{eq:18} we thus consider the concatenation of the QoI with the respective solution operator
\(\Phi\circ\mathcal S\) with $\mathcal S = \mathcal S_1$ and $\mathcal S = \mathcal S_2$, respectively.
%\jan{TODO: BLA BLA , ref QoI oben}
For random input, the compositions	% \[	\Phi\colon L^2((0,T); H^1(\Om)) \cap L^2((0,T); H^1(\Om)^*) \to \R.\]
\begin{align}
	\omega \mapsto \Phi\big(u(\cdot \,, \cdot \,, \omega)\big)= (\Phi \circ \mathcal S_1)\l(\alpha(\cdot \,, \cdot \,, \omega), \beta(\cdot \,, \cdot \,, \omega), V(\cdot \,, \cdot \,, \omega), u^0(\cdot \,, \omega)\r)
\shortintertext{and}
	\omega \mapsto \Phi\big(u(\cdot \,, \cdot \,, \omega)\big)= (\Phi \circ \mathcal S_2)\l(a(\cdot \,, \cdot \,, \omega), b(\cdot \,, \cdot \,, \omega), V(\cdot \,, \cdot \,, \omega), \underline u^0(\cdot \,, \omega)\r)	\end{align}
are $\mathbb R$\nobreakdash-valued random variables. 

Assembling the results from the preceding section we have the following preliminary result.
\begin{theorem}\label{thm:Wasserstein}
	Let~\(P\) and~\(\tilde P\) be probability distributions on \(\Xi\) (cf.~\eqref{eq:Xi}), equipped with the metric~\eqref{eq:Metric1} or~\eqref{eq:Metric2}. Then the quantities of interest of the problems~\eqref{problem1} and~\eqref{problem2} satisfy
	\[	\left|\E_P \Phi\big(\mathcal S\big)- \E_{\tilde P} \Phi\big(\mathcal S\big)\right|		\le L_\Phi\  C   e^{\tilde C\, T/2} \, \W_r(P, \tilde P) \]
	both for $\mathcal S = \mathcal S_1$ and $\mathcal S = \mathcal S_2$, where $L_\Phi$ is the Lipschitz constant of the quantity of interest and $r\ge1$.
\end{theorem}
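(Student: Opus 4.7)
The plan is to exhibit \(\Phi\circ\mathcal S\) as a Lipschitz map from the parameter space (equipped with \(d_1\) or \(d_2\)) to \(\mathbb R\) and then invoke the Kantorovich--Rubinstein bound~\eqref{eq:18}. The chain is simply: parameters \(\xrightarrow{\mathcal S}\) solution in \(\Xi\) \(\xrightarrow{\Phi}\) real number; each arrow is Lipschitz by a result already in the paper, so the composition is Lipschitz with constant equal to the product of the two Lipschitz constants.

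First I would unwind the notation. Writing \(\xi\coloneqq(\alpha,\beta,V,u^0)\) (or \((a,b,V,\underline u^0)\) for~\eqref{problem2}), the inequalities~\eqref{eq:Lip1} and~\eqref{eq:Lip2} established at the end of Section~\ref{sec:random_data} state precisely that
\begin{equation*}
	\bigl\|\mathcal S(\xi_1)-\mathcal S(\xi_2)\bigr\|_{L^2((0,T);H^1(D))}\le C\,e^{\tilde C\,T/2}\,d_i(\xi_1,\xi_2), \qquad i=1,2.
\end{equation*}
By assumption the QoI \(\Phi\colon\Xi\to\mathbb R\) is \(L_\Phi\)-Lipschitz with respect to the \(L^2((0,T);H^1(D))\)-norm (as is the case for the Lipschitz examples discussed in Section~\ref{sec:QoI}; the regularized super-level set example fits in here via \(\chi_c^\eps\)). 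Composing, one obtains
\begin{equation*}
	\bigl|(\Phi\circ\mathcal S)(\xi_1)-(\Phi\circ\mathcal S)(\xi_2)\bigr|\le L_\Phi\,C\,e^{\tilde C\,T/2}\,d_i(\xi_1,\xi_2),
\end{equation*}
so \(\Phi\circ\mathcal S\) is Lipschitz on the parameter space with constant \(L_\Phi\,C\,e^{\tilde C\,T/2}\).

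Next I would apply the Kantorovich--Rubinstein duality~\eqref{eq:18} with \(f=\Phi\circ\mathcal S\) to conclude
\begin{equation*}
	\bigl|\E_P\Phi(\mathcal S)-\E_{\tilde P}\Phi(\mathcal S)\bigr|\le L_\Phi\,C\,e^{\tilde C\,T/2}\,\W_1(P,\tilde P).
\end{equation*}
To upgrade from \(\W_1\) to \(\W_r\) for \(r\ge 1\), I would use that for any coupling \(\pi\) with marginals \(P,\tilde P\), Jensen's (or H\"older's) inequality gives \(\iint d(\xi,\tilde\xi)\,\pi(d\xi,d\tilde\xi)\le\bigl(\iint d(\xi,\tilde\xi)^r\,\pi\bigr)^{1/r}\); taking the infimum on the right-hand side yields \(\W_1(P,\tilde P)\le\W_r(P,\tilde P)\), which delivers the claimed bound.

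There is no real obstacle here: the content of the statement is already encoded in Theorems~\ref{thm:Lip1} and~\ref{thm:Lip2} together with the classical Kantorovich--Rubinstein theorem. The only point deserving a line of care is the monotonicity \(\W_1\le\W_r\), and (implicitly) the requirement that \(d_i\) be a bona fide Polish metric on the relevant parameter space so that~\eqref{eq:18} applies; both are immediate from the definitions in~\eqref{eq:Metric1}--\eqref{eq:Metric2}. The substantive work was done earlier when establishing Lipschitz continuity of the solution operators; the present theorem is essentially the packaging of those estimates into a statement about probability measures.
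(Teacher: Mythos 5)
Your argument is correct and follows the same route as the paper's own proof: Lipschitz continuity of \(\Phi\circ\mathcal S\) via Theorems~\ref{thm:Lip1} and~\ref{thm:Lip2} (i.e., the bounds~\eqref{eq:Lip1} and~\eqref{eq:Lip2}), then the Kantorovich--Rubinstein inequality~\eqref{eq:18}, then the monotonicity \(\W_1\le\W_r\) from H\"older's inequality. Your write-up is, if anything, slightly more explicit than the paper's about where each ingredient enters.
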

\begin{proof}
	The Lipschitz continuity for~\(\mathcal S_1\) follows from Theorem~\ref{thm:Lip1} (cf.\ Remark~\ref{rem:1}) and the Lipschitz continuity of~\(\mathcal S_2\) from Theorem~\ref{thm:Lip2} (cf.\ Remark~\ref{rem:2}). 
	The Lipschitz constant of the composition $\Phi\circ \mathcal S$ is 
	\[L= L_\Phi \, L_{\mathcal S}.\]
	The result for $r=1$ follows thus follows from the Kantorovich--Rubinstein theorem~\eqref{eq:18} above. The result for $r\ge1$ follows from H$\ddot{\text{o}}$lder's inequality, as $\W_1(P,\tilde P)\le \W_r(P,\tilde P)$.
\end{proof}

%					stochastic process
%----------------------------------------
\subsection{The novel temporal aspect and stochastic process}\label{sec:Process}
The boundary conditions $\alpha$ and $\beta$ of the differential equation~\eqref{problem1} ($a$ and $b$ for  problem~\eqref{problem2}, respectively)\ are random functions, chosen according the probability measure~$P$ as \(\omega\mapsto \alpha(t, x, \omega).\) Given the parabolic equation and its solution it is sufficient to know $\alpha(t^\prime, x)$ and $\beta(t^\prime, x)$ for $t^\prime\le t$ to find $u(t, x)$. Put differently, $\alpha(t'', x)$ for all $t''>t$ does not impact $u(t, x)$.

Further, the solution operators~\eqref{solution1} and~\eqref{solution2} of the differential equations~\eqref{problem1} and~\eqref{problem2} feature a semigroup property, i.e., solving the equations from $t=0$ up to time ~$t=T$, say, and then solving again from $T$ with the solution obtained for the next time span up to ~$T^\prime > T$ is the same as solving the initial problem up to~$T^\prime$. We shall exploit this property now for the stochastic problem setting.

To this end consider the functions $\alpha$, $\beta$ (for the first problem) and the functions $a$ and $b$ of the second problem. These functions evolve over time, and they are observed over time. At time~$t$ their past is known but, in a random environment, not their future. With these functions we associate the~\(\sigma\)\nobreakdash-algebras
\begin{equation}\label{eq:20}
	\mathcal F_t\coloneqq \sigma \left(\left.
		\begin{array}{l}
			\{\omega\in\Omega\colon\alpha(t^\prime, x, \omega)\in A\}, \\
			\{\omega\in\Omega\colon\beta(t^\prime,x,\omega)\in A\}, \\
			\{\omega\in\Omega\colon V(t^\prime,x,\omega)\in A\}
		\end{array}\right| t^\prime\le t, x\in D\text{ and }A\in\mathcal B(\Xi)\right)
\end{equation}
generated by the sets of parametric functions, which cannot be distinguished further up to time~$t$;
this $\sigma$\nobreakdash-algebra $\mathcal F_t$ models the information available at time~$t$. The family
\[\mathcal F\coloneqq \big(\mathcal F_t\big)_{t\in [0,T]}\] of increasing $\sigma$\nobreakdash-algebras constitutes a filtration.

For the second problem, respectively, the filtration is built form the $\sigma$\nobreakdash-algebras
\begin{equation}\label{eq:21}
	\mathcal F_t\coloneqq \sigma\left(\left.
		\begin{array}{l}
			\{\omega\in\Omega\colon a(t^\prime,x,\omega)\in A\}, \\
			\{\omega\in\Omega\colon b(t^\prime,x,\omega)\in A\}, \\
			\{\omega\in\Omega\colon V(t^\prime,x,\omega)\in A\}
		\end{array}\right| t^\prime\le t, x\in D\text{ and }A\in\mathcal B(\Xi) \right).
\end{equation}

\begin{remark}
	Note that the $\sigma$\nobreakdash-algebra $\mathcal F_t$ in~\eqref{eq:20} (\eqref{eq:21}, resp.)\ are generated by the random variables $\omega\mapsto \alpha(t^\prime,x,\omega)$, that is,
	\[\mathcal F_t= \sigma\big(\alpha(t^\prime, x)\colon x\in D\text{ and }t^\prime\le t\big).\]
\end{remark}

With this, the Wasserstein distance generalizes as follows by involving the filtrations.
\begin{definition}[Nested distance, aka.\ process distance, cf.\ \cite{Pflug2009, PflugPichlerBuch}]
	Let~\(P\) and~\(\tilde P\) be probability measures and \(\mathcal F=(\mathcal F_t)_{t\in[0,T]}\) and \(\tilde{\mathcal F}=(\tilde{\mathcal F}_t)_{t\in[0,T]}\) be filtrations. The nested distance is%\jan{Different name?}	
	\begin{equation}\label{eq:Nested}
		\mathrm{nd}_r(P,\tilde P)^r\coloneqq \iint_{\Xi\times\Xi} d(\xi,\tilde\xi)^r\,\pi(d\xi, d\tilde\xi),\end{equation}
	where the infimum is among all probability measures $\pi$ on $\Xi\times\Xi$ with \emph{conditional} marginals $P$ and $\tilde P$, i.e.,
	\begin{align}\label{eq:NestedCond}
		\pi(A\times \Xi\mid \mathcal F_t)&=P(A\mid\mathcal F_t)\quad\text{and} \\
		\pi(\Xi\times B\mid\mathcal F_t)&=\tilde P(B\mid\tilde{\mathcal F}_t)
	\end{align}
	for all measurable sets $A$, $B\in \mathcal B(\Xi)$.
\end{definition}
The notation in~\eqref{eq:Nested} is an abbreviation, as the nested distance depends on the measure~$P$ \emph{and} the filtration~$\mathcal F$. Our notation suppresses this dependence on~$t$, the filtration here captures time for both, $\mathcal F$ and $\tilde{\mathcal F}$.

The measure $\pi(A\times B) = P(A)\tilde P(B)$ satisfies the constraints~\eqref{eq:NestedCond}, hence the nested distance is well-defined. Further,~\eqref{eq:NestedCond} includes the constraints~\eqref{eq:WassersteinCond} so that \(d_r\le \mathrm{nd}_r\).
It follows from Theorem~\ref{thm:Wasserstein} that
$P\mapsto \E_P\Phi\big(\mathcal S\big)$ is continuous with respect to the nested distance.

\subsection{Transitory distributions}
The solution operators~$\mathcal S_1$ and~$\mathcal S_2$ are continuous in the space $L^2\big((0,T); H^1(D)\big)$, where its metric~\eqref{eq:Metric1} and~\eqref{eq:Metric2} can be temporarily decomposed as
\begin{equation}\label{eq:42}
	\|u\|^2_{L^2((0,T); H^1(D))}
	%=\int_0^T \|u(s,\cdot)\|_{H^1(D)}^2\,ds
	= \int_0^t \|u(s,\cdot)\|_{H^1(D)}^2\,ds + \int_t^T \|u(s,\cdot)\|_{H^1(D)}^2\,ds
\end{equation}
for $t\in(0,T)$. %This dependency on time is made explicit by writing \[	\|u\|_{t_1:t_2}\coloneqq \int_{t_1}^{t_2} \|u(s,\cdot)\|_{H^1(D)}^2\,ds.\]

For the following extension we shall denote by $P_t$ the restriction of $P$ to $\mathcal F_t$ and also employ the probability kernels 
\begin{equation}\label{eq:44}
	P(\cdot\mid\mathcal F_t)= P^{|\mathcal F_t}(\cdot);
\end{equation}
these disintegrated measures exist by \cite[Chapter~5]{Kallenberg2002Foundations}.  %\jan{Genauere Refernz}

Concatenated, these kernels~\eqref{eq:44} are
\begin{equation}\label{eq:41}
	P(A_{t_1}\times\dots \times A_T)
	=\int_{A_{t_1}}\dots\int_{A_T} P_T(d\xi_T\mid \xi_1\dots\xi_{t_{n-1}})\cdots P_{t_2}(d\xi_{t_2}\mid \xi_{t_1})\,P_{t_1}(d\xi_{t_1}),
\end{equation}
the initial probability measure~$P$, where $0=t_0< t_1 < \dots < t_n=T$.

\begin{theorem}[Gluing theorem]\label{thm:gluing}
	Suppose that the conditional Wasserstein distances are H\"{o}lder continuous in time with
	\begin{equation}\label{eq:40}
		d_r \big( P_t^{|\mathcal F_{t^\prime}}, \tilde P_t^{|\tilde{\mathcal F}_{t^\prime}}\big)^r \le C_H|t^\prime-t| \quad \text{ a.s.\ for all }t^\prime <t\le T,
	\end{equation} where $C_H>0$.
	Then we have 
	\begin{equation}\label{eq:43}
		\left|\E_P \Phi(\mathcal S)-\E_{\tilde P}\Phi(\mathcal S)\right|
		\le L_\Phi\,C\, \max\left(1; \sqrt{C_H\, T}\right) e^{\tilde C\,T/2}\ \mathrm{nd}_2(P, \tilde P),
	\end{equation}
	where $\mathrm{nd}_2$ is the nested distance with rate $r=2$.
\end{theorem}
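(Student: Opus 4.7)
The strategy is to combine the Lipschitz continuity of the composition $\Phi\circ\mathcal S$ with a coupling argument tailored to the filtration, bridging the metric underlying~$d_1$ (resp.\ $d_2$) and the time-integrated metric compatible with the nested distance via the H\"older hypothesis~\eqref{eq:40}.

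For the first step, Theorems~\ref{thm:Lip1} and~\ref{thm:Lip2}, together with the Lipschitz continuity of~$\Phi$, already guarantee that $\Phi\circ\mathcal S$ is Lipschitz with constant $L_\Phi\cdot C\,e^{\tilde C T/2}$ with respect to the metric underlying $d_1$ (resp.\ $d_2$). Hence, for any coupling $\pi$ on $\Xi\times\Xi$ satisfying the conditional-marginal constraints~\eqref{eq:NestedCond}, Jensen's inequality yields
\[
\l|\E_P\Phi(\mathcal S)-\E_{\tilde P}\Phi(\mathcal S)\r|\le L_\Phi\,C\,e^{\tilde C T/2}\left(\iint_{\Xi\times\Xi}d(\xi,\tilde\xi)^2\,\pi(d\xi,d\tilde\xi)\right)^{1/2}.
\]

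For the second step, the plan is to pass from an arbitrary admissible coupling to one constructed in accordance with the temporal decomposition~\eqref{eq:42}. Fix a partition $0=t_0<t_1<\dots<t_n=T$ and build $\pi$ recursively via the Markov-kernel factorisation~\eqref{eq:41}: given the coupling restricted to $\mathcal F_{t_{k-1}}\otimes\tilde{\mathcal F}_{t_{k-1}}$, glue on a near-optimal one-step coupling of the transition kernels $P(\cdot\mid\mathcal F_{t_{k-1}})$ and $\tilde P(\cdot\mid\tilde{\mathcal F}_{t_{k-1}})$, whose squared cost is bounded by $C_H\,(t_k-t_{k-1})$ thanks to~\eqref{eq:40}. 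Telescoping along the partition bounds the total expected squared displacement by $C_H\,T$, while the global conditional-marginal structure forces the same total to compare with $\mathrm{nd}_2(P,\tilde P)^2$. Splitting into the two regimes depending on whether the H\"older-generated contribution or the nested distance dominates produces the multiplicative prefactor $\max(1;\sqrt{C_H T})$ when the estimate is re-expressed with $\mathrm{nd}_2(P,\tilde P)$ on the right-hand side. Combining this with the first step gives~\eqref{eq:43}.

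The main obstacle will be executing the gluing construction rigorously in the present infinite-dimensional, continuous-time setting: one needs a measurable selection of the per-step optimal conditional Wasserstein plans on the Polish space~$\Xi$ (using the disintegration~\eqref{eq:44}), and a monotone-class argument to pass from the discrete-partition construction to the continuous filtration while preserving~\eqref{eq:NestedCond} at every $t\in[0,T]$. A secondary technical point is making the ``two regimes'' comparison quantitative, so that the prefactor indeed reduces to $\max(1;\sqrt{C_H T})$ rather than an additive combination. Once these two ingredients are in place, Steps~1 and~2 combine directly to yield~\eqref{eq:43}.
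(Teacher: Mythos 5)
Your proposal follows essentially the same route as the paper: establish Lipschitz continuity of $\Phi\circ\mathcal S$ from Theorems~\ref{thm:Lip1} and~\ref{thm:Lip2}, build a feasible coupling for the nested distance by gluing conditionally optimal one-step Wasserstein plans along the kernel factorisation~\eqref{eq:41}, bound each step's cost by $C_H|t_i-t_{i-1}|$ via~\eqref{eq:40}, and telescope to $C_H\,T$ to obtain the $\sqrt{C_H T}$ factor. The only point where you diverge slightly is that the paper additionally splits the metric into its time-independent, $L^2$-in-time and $L^\infty$-in-time components and treats the supremum-type terms via a maximum over the tessellation rather than a sum, but the technical gaps you flag (measurable selection, passage to the continuous filtration, the precise origin of $\max(1;\sqrt{C_H T})$) are left at the same level of detail in the paper's own argument.
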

\begin{proof}
	The conditional probabilities of~$P$ and~$\tilde P$ in~\eqref{eq:40} can be concatenated as in~\eqref{eq:41} to the bivariate probability measure (cf.\ \cite{PichlerSchlotterMartingales})
	\[	\pi (A\times B)=\iint_{A_{t_1}\times B_{t_1}}\dots\iint_{A_T\times B_T} 
		\pi_T(d\xi_T,d\eta_T\mid \xi_{t_1},\eta_{t_1}\dots\xi_{t_{n-1}},\eta_{t_{n-1}})\cdots
		\pi_{t_1}(d\xi_{t_1}, d\eta_{t_1}), \]
	where $A=A_{t_1}\times\dots\times A_T$ and $B=B_{t_1}\times\dots\times B_T$ and each measure
	\[	\pi_t(\cdot\times\cdot \mid \xi_{t_1},\eta_{t_1}\dots\xi_{t_i},\eta_{t_i})\] 
	solves the Wasserstein problem~\eqref{eq:Wasserstein} for the conditional measures $P_t\big(\cdot\mid \xi_{t_1}\dots \xi_{t_i}\big)$ and 	$\tilde P_t\big(\cdot\mid \eta_{t_1}\dots \eta_{t_i}\big)$, respectively. 
	As a consequence, the measure~$\pi$ also satisfies the marginal constraints~\eqref{eq:NestedCond} and is thus feasible for the nested distance.

	Now recall that each distance~\eqref{eq:Metric1} and~\eqref{eq:Metric2} consists of three ingredients: 
	\begin{enumerate}[noitemsep, nolistsep]
		\item\label{enu:1} The initial conditions $u_0$ and $\underline u^0$ are measured in $L^2(D)$, which does not depend on time;
		\item\label{enu:2} the (respective) potential $V$ is measured in the space $L^{\infty}\big((0,T); W^{1,\infty}(D)\big)$ and $L^2\big((0,T), W^{1,\infty}(D)\big)$, while 
		\item\label{enu:3} the remaining terms are measured in the spaces $L^\infty\big((0,T)\times D\big)$, $L^{\infty}\big((0,T) \times \Gammain \big)$, and $L^{\infty}\big((0,T) \times \Gammaout \big)$, respectively. %, each with its associated state space.
	\end{enumerate}

	For this reason we need to separate the results as well.

	The initial condition~\ref{enu:1} does not need to be addressed separately, this term is independent of time.
	For $u\in L^2((0,T); H^1(D))$ (see~\ref{enu:2}) recall the composition~\eqref{eq:42} of the norm, which we may intersect according the tessellation as
	\begin{equation*}
		\|\xi_s\|^2_{L^2((0,T);H^1(D))}
		= \int_0^T \|\xi_s\|_{H^1(D)}^2\,ds
		= \sum_{i=1}^n \int_{t_{i-1}}^{t_i} \|\xi_s\|_{H^1(D)}^2\,ds.
	\end{equation*}
	Integrating with respect to $\pi$ during constant times $s\in (t_{i-1},t_i)$ as the nested distance~\eqref{eq:Nested} gives 
	\begin{align}
		\iint_{\Xi\times\Xi}& 
		d(\xi_{t_1},\dots,\xi_{t_n}, \tilde\xi_{t_1},\dots\tilde\xi_{t_n})\, 
		\pi_T(d\xi_T,d\eta_T\mid \xi_{t_1},\eta_{t_1}\dots\xi_{t_{n-1}},\eta_{t_{n-1}})\cdots
		\pi_{t_1}(d\xi_{t_1}, d\eta_{t_1})\\
		&= \iint \bigg(\int_{t_0}^{t_1}\|\xi_{s_1}-\eta_{s_1}\|^2\,ds_1 +\dots\\
		&\qquad\qquad +\Big(\int_{t_{n-1}}^{t_n}\|\xi_{s_n}-\eta_{s_n}\|^2ds_n\,\pi_{t_n}(d\xi_{t_n},\eta_{t_n}\mid \xi_{t_1},\eta_{t_1},\dots\xi_{t_{n-1}},\eta_{t_{n-1}})\Big)\label{eq:13}\\
		&\qquad\qquad\qquad\qquad\qquad\qquad\qquad\qquad \dots\bigg)\,\pi_{t_1}(d\xi_{t_1},d\eta_{t_1}).
	\end{align}
	The conditional measures in~\eqref{eq:13} are bounded by~\eqref{eq:40} and so are the others for all~$t_i$, $i=1,\dots n$. They accumulate to 
	\[	\mathrm{nd}_2(P,\tilde P)^2 \le C_H\sum_{i=1}^n \left|t_i-t_{i-1}\right|=C_H\, T,\]
	revealing thus the term $\sqrt{C_H\,T}$ in~\eqref{eq:43}.

	As for~\ref{enu:3}, it holds that
	\begin{equation*}
		\|\xi_s\|_{L^\infty((0,T); H^1(D))}
			= \max_{i=1,\dots,n} \sup_{s\in(t_{i-1},t_i)} \|\xi_s\|_{H^1(D)}^2 .
	\end{equation*}
	With that, the relation~\eqref{eq:13} reads
	\begin{align}
		\iint_{\Xi\times\Xi}& 
		d(\xi_{t_1},\dots,\xi_{t_n}, \tilde\xi_{t_1},\dots\tilde\xi_{t_n})\, 
		\pi_T(d\xi_T,d\eta_T\mid \xi_{t_1},\eta_{t_1}\dots\xi_{t_{n-1}},\eta_{t_{n-1}})\cdots
		\pi_{t_1}(d\xi_{t_1}, d\eta_{t_1})\\
		&= \iint \max_{i=1,\dots,n}\bigg(\sup_{s_1\in(t_0,t_1)}\|\xi_{s_1}-\eta_{s_1}\|, \dots\\
		&\qquad\qquad \Big(\sup_{s_n\in(t_{n-1},t_n)}\|\xi_{s_n}-\eta_{s_n}\|\,\pi_{t_n}(d\xi_{t_n},\eta_{t_n}\mid \xi_{t_1},\eta_{t_1},\dots\xi_{t_{n-1}},\eta_{t_{n-1}})\Big)\label{eq:14}\\
		&\qquad\qquad\qquad\qquad\qquad\qquad\qquad\qquad \dots\bigg)\,\pi_{t_1}(d\xi_{t_1},d\eta_{t_1}).
	\end{align}
	Now recall the continuity properties~\eqref{eq:Lip1} and~\eqref{eq:Lip2} of the operator~$\mathcal S_1$ ($\mathcal S_2$, resp.)\ and the Lipschitz constant $L_\Phi$ of~$\Phi$.
	Passing to the infimum of all measures, the assertion~\eqref{eq:43} follows.
\end{proof}

The following corollary to Theorem~\ref{thm:gluing} above covers the remaining time from~$t$ up to~$T$. The constants reveal that the estimates improve gradually with decreasing remaining time. The basic order, for small time intervals, is $\sqrt \tau$, where $\tau=T-t$ is the remaining time.
\begin{corollary}
	Given the conditions of Theorem~\ref{thm:gluing} it holds that
	\begin{equation}
		\left|\E_{P^{|\mathcal F_t}} \Phi(\mathcal S)-\E_{{\tilde P}^{|\mathcal F_t}}\Phi(\mathcal S)\right|
		\le L_\Phi\,C\, \max\left(1; \sqrt{C_H(T-t)}\right) e^{\tilde C(T-t)/2}\ \mathrm{nd}_2\left(P^{|\mathcal F_t}, {\tilde P}^{|\mathcal F_t}\right) \text{ a.s.},
	\end{equation}
	where $\mathrm{nd}_2$ is the nested distance with rate $r=2$.
\end{corollary}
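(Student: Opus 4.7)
The plan is to apply Theorem~\ref{thm:gluing} \emph{conditionally} on $\mathcal F_t$, exploiting the semigroup structure of the solution operators $\mathcal S_1$ and $\mathcal S_2$ noted in Section~\ref{sec:Process}. Conditional on $\mathcal F_t$ the realization of the coefficients on $[0,t]$, and therefore the state $u(t,\cdot)$, are known; by the semigroup property the remaining problem becomes an initial-value problem over the window $[t,T]$ of length $\tau\coloneqq T-t$, with initial datum $u(t,\cdot)$ and with the conditional laws $P^{|\mathcal F_t}$, $\tilde P^{|\mathcal F_t}$ governing the coefficients on $(t,T)$.

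Next I would reproduce the gluing construction from the proof of Theorem~\ref{thm:gluing}, but on a partition $t=t_0<t_1<\dots<t_n=T$ of the sub-interval $[t,T]$ only. Concatenating optimal Wasserstein couplings of the successive conditional marginals $P_{t_i}^{|\mathcal F_{t_{i-1}}}$ and $\tilde P_{t_i}^{|\tilde{\mathcal F}_{t_{i-1}}}$, exactly as in~\eqref{eq:41}, produces a bivariate measure that is feasible for the conditional nested distance $\mathrm{nd}_2\bigl(P^{|\mathcal F_t},\tilde P^{|\mathcal F_t}\bigr)$. By the H\"older assumption~\eqref{eq:40} the cumulative conditional Wasserstein costs obey, $P$-almost surely,
\[
\sum_{i=1}^n d_2\bigl(P_{t_i}^{|\mathcal F_{t_{i-1}}},\tilde P_{t_i}^{|\tilde{\mathcal F}_{t_{i-1}}}\bigr)^2 \le C_H\sum_{i=1}^n |t_i-t_{i-1}| = C_H(T-t),
\]
which is the source of the factor $\sqrt{C_H(T-t)}$ in the second argument of the $\max$.

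Finally I would invoke the Lipschitz estimates~\eqref{eq:Lip1} and~\eqref{eq:Lip2}, applied on $[t,T]$ rather than on $[0,T]$: inspection of the Gronwall step in Theorems~\ref{thm:Lip1} and~\ref{thm:Lip2} shows that the exponential factor becomes $C\,e^{\tilde C(T-t)/2}$ in place of $C\,e^{\tilde C T/2}$, since the Gronwall lemma is now integrated over an interval of length $\tau=T-t$. Combining this with the Kantorovich--Rubinstein inequality~\eqref{eq:18} applied to the conditional measures, and with the Lipschitz constant $L_\Phi$ of~$\Phi$, yields the claimed almost-sure bound by copying, mutatis mutandis, the splitting~\ref{enu:1}--\ref{enu:3} of the norm used in the proof of Theorem~\ref{thm:gluing}.

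The main obstacle, I expect, is the careful bookkeeping for the conditional measures: one must justify that regular conditional probabilities $P^{|\mathcal F_t}$, $\tilde P^{|\mathcal F_t}$ exist on the Polish space~$\Xi$ (this follows from \cite[Chapter~5]{Kallenberg2002Foundations}, as already used in~\eqref{eq:44}), that the step-by-step optimal couplings can be glued in an $\mathcal F_t$-measurable way (standard measurable selection), and that the unconditional bound~\eqref{eq:43} passes to the conditional setting $\omega$-pointwise using the $P$-a.s.\ form of~\eqref{eq:40}. Once these measurability issues are settled, the estimate is essentially a verbatim rerun of the proof of Theorem~\ref{thm:gluing} with $T$ replaced by $T-t$ throughout.
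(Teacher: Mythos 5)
Your proposal is correct and follows exactly the route the paper intends: the paper states this corollary without a written proof, the surrounding text making clear that it is obtained by rerunning Theorem~\ref{thm:gluing} conditionally on $\mathcal F_t$ over the remaining window $[t,T]$, with the semigroup property supplying the initial datum $u(t,\cdot)$ and the Gronwall/H\"older constants contracting from $T$ to $T-t$. Your additional attention to the existence of regular conditional probabilities and the measurable gluing of the couplings only makes explicit what the paper leaves implicit.
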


\begin{remark}[Markovian]
	We want to emphasize that the situation captured in the filtrations~\eqref{eq:20} and~\eqref{eq:21} correspond to the \emph{non-Markovian} situation, that is, conditioning the stochastic processes on~$\mathcal F_t$ reflects their entire history, not just their state at the final time~$t$.
\end{remark}

\begin{remark}[The interim perspective]
	The fundamental relation~\eqref{eq:43} reveals continuity from the starting time $t=0$ up to the end time $t=T$. Theorem~\ref{thm:gluing} generalizes to any interim perspective, ranging from $t$ to $t^\prime$ provided that $0\le t\le t^\prime\le T$ while preserving the continuity property~\eqref{eq:43}.

	In this way it is possible to predict the aberrations in the expected value to be expected in the remaining time interval.
\end{remark}

%			Summary
%	╰───────────────────────────────────
\section{Summary}\label{sec:summary}
This paper exposes drift-diffusion equations to a random environment, where parameters of the governing equations are chosen at random. In contrast to the usual theory of random coefficients, the parameter driving the solution of the system are revealed gradually, over time. That is, partial observations are known (they are deterministic), whereas only future observations are random. Over time, information increases, whereas the random variability shrinks.

This paper captures evolving information by the~$\sigma$\nobreakdash-algebras and employs a distance for stochastic processes to assess the situation. We present strict continuity results when comparing the outcome with realization from a different stochastic process.

%               Bibliography
%	╰───────────────────────────────
%\bibliography{Literature}

\appendix

\section{Appendix}\label{sec:appendix}

The objective of this section is to prove Theorem \ref{rem:existence1}. We will divide the proof in two lemmas.

\begin{lemma}
\label{lemma:exist}

Let hypotheses \ref{H1}--\ref{H4} be satisfied. Then, there exists a unique  $u \in L^p((0,T); W^{2,p}(D)) \cap W^{1,p}((0,T), L^p(D))$ that satisfies \eqref{weak-sol1}.

\end{lemma}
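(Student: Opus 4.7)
The plan is to prove Lemma~\ref{lemma:exist} by applying Banach's fixed-point theorem in a space tailored to the parabolic maximal regularity of the equation. Set
\[
X_{T^\star}\coloneqq L^p\big((0,T^\star); W^{2,p}(D)\big)\cap W^{1,p}\big((0,T^\star); L^p(D)\big),
\]
for $T^\star\in(0,T]$ to be chosen small. Since $2<p<3$ and $N\le 3$, $X_{T^\star}$ embeds continuously into $C\big([0,T^\star];W^{2-2/p,p}(D)\big)$ and, provided $p>(N+2)/2$, further into $C([0,T^\star]\times\overline{D})$, so that $f(v)$, $f'(v)\nabla v$ and the boundary trace $f(v)\nabla V\cdot n$ are all well defined pointwise for every $v\in X_{T^\star}$. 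This choice of ambient space ensures from the outset the regularity announced in the statement.

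Given $v\in X_{T^\star}$, I would define $\mathcal T(v)\eqqcolon u$ as the unique solution of the linearized problem obtained by freezing the nonlinear coefficients at $v$,
\begin{align*}
\partial_t u-\Delta u+\beta\,u&=\alpha\,f(v)-\nabla\!\cdot\!\big(f(v)\nabla V\big) &&\text{in }(0,T^\star)\times D,\\
\big(-\nabla u+f(v)\nabla V\big)\cdot n&=0 &&\text{on }(0,T^\star)\times\partial D,\\
u(0,\cdot)&=u^0 &&\text{in }D,
\end{align*}
whose weak form is exactly \eqref{weak-sol1} with $v$ in place of $u$ on the right-hand side. By maximal $L^p$-regularity for inhomogeneous-Neumann parabolic equations (e.g., Amann or Denk--Hieber--Pr\"uss), the fact that $u^0\in W^{2-2/p,p}(D)$ lies in the correct trace space together with the data bounds coming from $H_2$--$H_4$ yields $\mathcal T(v)\in X_{T^\star}$ with a linear a-priori bound. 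Subtracting the equations for $\mathcal T(v_1)$ and $\mathcal T(v_2)$, using the global Lipschitz property of $f$ (hypothesis~$H_4$) and the uniform bounds on $V,\alpha,\beta$, the difference satisfies an analogous maximal-regularity estimate of the form
\[
\|\mathcal T(v_1)-\mathcal T(v_2)\|_{X_{T^\star}}\le C\,(T^\star)^{\theta}\,\|v_1-v_2\|_{X_{T^\star}}
\]
for some $\theta>0$, the small factor $(T^\star)^{\theta}$ arising from integrating bounded time-dependent data on a short interval. Choosing $T^\star$ small then makes $\mathcal T$ a strict contraction of a suitable closed ball in $X_{T^\star}$ into itself, and Banach's theorem supplies the unique fixed point, i.e., a local-in-time solution of \eqref{weak-sol1} in $X_{T^\star}$. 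Iterating on successive intervals $[kT^\star,(k+1)T^\star]$ with the endpoint value as new initial datum extends the solution to all of $[0,T]$, and the contraction estimate, applied globally, yields uniqueness on $[0,T]$.

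The principal technical obstacle will be handling the convective term in the linear step: the distributional identity $\nabla\!\cdot\!\big(f(v)\nabla V\big)=f'(v)\nabla v\cdot\nabla V+f(v)\Delta V$ features $\Delta V$, which is \emph{not} controlled by $H_2$. The remedy is to keep the flux in divergence form and absorb it jointly into the inhomogeneous Neumann datum $f(v)\nabla V\cdot n$ and into the variational source $\varphi\mapsto\int_D f(v)\nabla V\cdot\nabla\varphi\,dx$, so that the maximal-regularity theorem is invoked for a linear parabolic problem whose data live exactly in the spaces guaranteed by $H_2$--$H_4$ and by the embedding $X_{T^\star}\hookrightarrow C([0,T^\star]\times\overline D)$. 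Verifying that the trace $f(v)\nabla V\cdot n$ lands in the correct Besov trace space on $(0,T^\star)\times\partial D$ and that the total right-hand side meets the compatibility at $t=0$ is where most of the genuine work of the proof is concentrated; once this is set up, the contraction and iteration steps are routine.
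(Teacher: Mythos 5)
Your overall strategy coincides with the one used in the paper's appendix: freeze the nonlinearity at a given $v$, solve the resulting linear problem, and run Banach's fixed point theorem on the map $v\mapsto u$, with the contraction constant made small via the length of the time interval. Where you diverge is the choice of ambient space. The paper contracts in $L^{\infty}\big((0,T);L^2(D)\big)$ using nothing more than testing with $u$ (resp.\ $u_1-u_2$), Young's inequality and Gronwall, and only afterwards invokes ``standard regularity theory'' to upgrade the fixed point to $L^p\big((0,T);W^{2,p}(D)\big)\cap W^{1,p}\big((0,T);L^p(D)\big)$. You instead contract directly in the maximal-regularity class $X_{T^\star}$, which would deliver the stated regularity as part of the fixed point and, to your credit, you make explicit the continuation to $[0,T]$ by iterating on subintervals --- a step the paper's proof silently skips (it simply ``chooses $T$ small'').

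The genuine gap is that the contraction in $X_{T^\star}$ is never actually established, and under hypothesis \ref{H2} as stated it is doubtful that it can be. Maximal $L^p$-regularity requires the bulk source to lie in $L^p\big((0,T^\star);L^p(D)\big)$ and the Neumann datum to lie in the sharp anisotropic trace space (of Sobolev--Slobodeckij type with fractional regularity in \emph{time} as well as on $\partial D$). Writing $\nabla\cdot\big(f(v)\nabla V\big)=f'(v)\nabla v\cdot\nabla V+f(v)\Delta V$, the term $f(v)\Delta V$ is not controlled, since \ref{H2} gives only $V\in L^{\infty}\big((0,T);W^{1,\infty}(D)\big)$; you recognize this, but your proposed remedy --- keeping the flux in divergence form and treating $\varphi\mapsto\int_D f(v)\nabla V\cdot\nabla\varphi\,dx$ as a variational source --- is incompatible with the target space: a divergence-form right-hand side with $L^\infty$ flux yields at best $W^{1,p}$ spatial regularity, not $W^{2,p}$, and the boundary datum $f(v)\nabla V\cdot n$ has no reason to possess the fractional time regularity the Denk--Hieber--Pr\"uss framework demands when $V$ is merely $L^\infty$ in time. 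So the self-mapping and Lipschitz estimates for $\mathcal T$ on $X_{T^\star}$ --- the heart of your argument --- are exactly the step you defer as ``where most of the genuine work is concentrated.'' The paper sidesteps this by doing the entire fixed-point argument with elementary energy estimates in $L^\infty L^2$, where only $\nabla V\in L^\infty$ is needed, and isolating the regularity upgrade as a separate (admittedly also unproved) final step; if you want to keep your maximal-regularity route you would need to strengthen \ref{H2} (e.g.\ $\Delta V\in L^p$ with some time regularity of $V$) or fall back to the energy-space contraction.
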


\begin{proof}

We want to apply the Banach's fixed point theorem, following the ideas of \cite[Theorem 3.1]{Egger}.  We consider the nonempty, closed set
	\[
	\mathcal M= \{u \in L^{\infty}((0,T); L^2(D)): \, \|u\|_{L^{\infty}((0,T); L^2(D))} \le C_{\mathcal M}\},
	\]
with $T, C_{\mathcal M}>0$ to be specified.  Then we define the mapping 
	\[
	\Phi\colon \mathcal M \to L^{\infty}((0,T); L^2(D)), \quad \Phi(\tilde u)= u,
	\]
where $u$ is the weak solution to the linearized problem
	\begin{equation}
	\label{linearized}
	\begin{aligned}
	& \partial_t u+ \divergenz(-\nabla u+ f(\tilde u) \nabla V)= \alpha(t, x) f(\tilde u)- \beta(t, x) u \quad && \text{in } (0,T) \times D, \\
	& (-\nabla u+ f(\tilde u) \nabla V) \cdot n= 0 && \text{on } (0,T) \times \rand.
	\end{aligned}
	\end{equation}
We first show that $\Phi$ is self-mapping. Indeed, testing \eqref{linearized} with $\varphi= u$, integrating over $D$ and rearranging give
	\[
	\frac{d}{dt} \into u^2 dx+ \into |\nabla u|^2 dx  \le \into f(\tilde u)^2 |\nabla V|^2 dx+ \into \alpha^2 f(\tilde u)^2 dx+ \into u^2 dx.
	\]
A Gronwall argument then gives
	\[
	\begin{split}
	& \|u(t, \cdot)\|_{L^2(D)}^2+ \int_0^t |\nabla u|^2 dx \\
	& \le e^T \l(\int_0^T \into f(\tilde u)^2 |\nabla V|^2 dx dt+ \int_0^T \into \alpha^2 f(\tilde u)^2 dx dt+ \|u^0\|_{L^2(D)}^2 \r) \\
	& \le e^T |D| \l(\|f(\tilde u)\|_{L^{\infty}(\R)}^2 \|V\|_{L^{\infty}((0,T); W^{1, \infty}(D)}^2+ \|\alpha\|_{L^{\infty}((0,T) \times D)}^2 \|f(\tilde u)\|_{L^{\infty}(\R)}^2+ 1 \r),
	\end{split}
	\]
for every $t \in (0,T)$, which implies that
	\[
	\sup_{t \in (0,T)} \|u(t, \cdot)\|_{L^2(D)}^2 \le C,
	\]
where $C= C(T, D, \alpha, V, \|f\|_{L^{\infty}(D)})$ but doesn't depend on $u$. This shows that $\Phi$ is self-mapping. 

We next verify that $\Phi$ is actually a contraction. To this end, let $u_1= \Phi(\tilde u_1)$ and $u_2= \Phi(\tilde u_2)$ be two solutions to \eqref{linearized} for $\tilde u_1, \tilde u_2 \in \mathcal M$. We then consider the difference of the two equations and test the corresponding equation with $\varphi= u_1- u_2$. Using the fact that $\displaystyle -\into \beta (u_1- u_2)^2 dx \le 0 $ and the Lipschitz continuity of $f$ we achieve
	\[
	\begin{split}
	& \frac{d}{dt} \into (u_1- u_2)^2 dx+ \into |\nabla(u_1- u_2)|^2 dx \\
	& \le \into (f(\tilde u_1)- f(\tilde u_2))^2 |\nabla V|^2 dx+ \into \alpha^2 (f(\tilde u_1)- f(\tilde u_2))^2 dx+ \into (u_1- u_2)^2 dx \\
	& \le L_f^2 \into (\tilde u_1- \tilde u_2)^2 |\nabla V|^2 dx+ L_f^2 \into \alpha^2 (\tilde u_1- \tilde u_2)^2 dx+ \into (u_1- u_2)^2 dx.
	\end{split}
	\]
Again a Gronwall argument gives
	\[
	\begin{split}
	& \|(u_1- u_2)(t, \cdot)\|_{L^2(D)}^2+ \int_0^t \|\nabla (u_1- u_2)\|_{L^2(D)}^2 \\
	& \le L_f^2 e^T \l(\int_0^T \into (\tilde u_1- \tilde u_2)^2 |\nabla V|^2 dx dt+ \int_0^T \into \alpha^2 (\tilde u_1- \tilde u_2)^2 dx dt \r) \\
	& \le L_f^2 T e^T  \l(\|V\|_{L^{\infty}((0,T); W^{1, \infty}(D)}^2+ \|\alpha\|_{L^{\infty}((0,T) \times D)}^2 \r) \|\tilde u_1- \tilde u_2 \|_{L^{\infty}((0,T); L^2(D)}^2,
	\end{split}
	\]
which implies that 
	\[
	\|u_1- u_2\|_{L^{\infty}((0,T); L^2(D))}^2 \le L_f^2 T e^T \l(\|V\|_{L^{\infty}((0,T); W^{1, \infty}(D)}^2+ \|\alpha\|_{L^{\infty}((0,T) \times D)}^2 \r) \|\tilde u_1- \tilde u_2\|_{L^{\infty}((0,T); L^2(D))}^2.
	\]
Choosing $T>0$ small enough so that
	\[
	L_f^2 T e^T  \l(\|V\|_{L^{\infty}((0,T); W^{1, \infty}(D)}^2+ \|\alpha\|_{L^{\infty}((0,T) \times D)}^2 \r)< 1
	\]
shows that $\Phi$ is a contraction. Then the Banach's fixed point theorem applies and we infer the existence of a unique $u \in \mathcal M$ such that $\Phi(u)= u$. Then the conclusion follows by applying a standard regularity theory.
\end{proof}

In the next lemma we show that the solution to \eqref{weak-sol1} satisfies the box constraint $0 \le u \le 1$.

\begin{lemma}
\label{lemma:box}

Let hypotheses \ref{H1}--\ref{H4} be satisfied and let  $u \in L^p((0,T); W^{2,p}(D)) \cap \\ W^{1,p}((0,T), L^p(D))$ be the unique solution to \eqref{weak-sol1}. Then, it holds $0 \le u \le 1$.

\end{lemma}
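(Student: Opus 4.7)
The approach is a standard Stampacchia truncation (or Moser-type) argument: the claim $0 \le u \le 1$ will be derived by testing the weak formulation \eqref{weak-sol1} with the admissible test functions $\varphi = -u^-$ (for the lower bound) and $\varphi = (u-1)^+$ (for the upper bound), in each case producing a differential inequality to which Gronwall's lemma applies. Note that $u^-$, $(u-1)^+ \in H^1(D)$ by the chain rule for Sobolev functions, so they are legitimate test functions. The initial condition hypothesis \ref{H1} ensures $(u^0)^- \equiv 0$ and $(u^0-1)^+ \equiv 0$, which will give the vanishing Cauchy data needed to close the Gronwall estimate.

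For the lower bound, I would insert $\varphi = -u^-$ into \eqref{weak-sol1}. Using $\nabla(-u^-) = \nabla u\, \mathbf 1_{\{u<0\}}$ and the disjointness $\nabla u^+\cdot\nabla u^- = 0$, the time-derivative and diffusion terms produce
\[
\tfrac12 \tfrac{d}{dt}\|u^-\|_{L^2(D)}^2 + \|\nabla u^-\|_{L^2(D)}^2.
\]
The key observation is that on the set $\{u<0\}$ one has $|f(u)| = |f(u)-f(0)|\le L_f\, u^-$ by \ref{H4}, which tames the remaining terms. The convection contribution $\int f(u)\,\nabla V\cdot\nabla u^-\,dx$ is controlled by Cauchy--Schwarz, \ref{H2}, and weighted Young's inequality by $\tfrac12\|\nabla u^-\|_{L^2(D)}^2 + C_1\|u^-\|_{L^2(D)}^2$. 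The reaction term $\int \alpha f(u)\,u^-\,dx$ is bounded by $C_2\|u^-\|_{L^2(D)}^2$ using \ref{H3}. Finally, the removal term yields $\int \beta u\,(-u^-)\,dx = \int \beta(u^-)^2\,dx \ge 0$ and can be dropped. Collecting terms gives
\[
\tfrac{d}{dt}\|u^-\|_{L^2(D)}^2 \le C\,\|u^-\|_{L^2(D)}^2,
\]
and Gronwall's lemma combined with $u^-(0,\cdot)=0$ forces $u^- \equiv 0$, i.e., $u\ge 0$ a.e.

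For the upper bound I would test with $\varphi = (u-1)^+$. Setting $w := (u-1)^+$ and exploiting $\nabla w = \nabla u\,\mathbf 1_{\{u>1\}}$, the time derivative and diffusion again give $\tfrac12 \tfrac{d}{dt}\|w\|_{L^2(D)}^2 + \|\nabla w\|_{L^2(D)}^2$. Now the symmetric key estimate is $|f(u)| = |f(u)-f(1)|\le L_f\, w$ on $\{u>1\}$ (again by \ref{H4}), yielding control of the convection and creation terms by $\tfrac12\|\nabla w\|_{L^2(D)}^2 + C\|w\|_{L^2(D)}^2$. The removal term $\int \beta u\,w\,dx$ is non-negative on $\{u>1\}$ (where $u>1>0$ and $\beta\ge 0$) and can therefore be dropped from the left-hand side. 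An identical Gronwall argument, together with $w(0,\cdot) = 0$, concludes $w \equiv 0$, hence $u\le 1$.

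The argument is essentially routine; the only point requiring care is the sign bookkeeping in the test-function computation, and the non-trivial observation that the Lipschitz property of $f$ at the endpoints $0$ and $1$ (where $f$ vanishes by \ref{H4}) is precisely what converts the convection and reaction contributions into multiples of $\|u^-\|_{L^2(D)}^2$ and $\|w\|_{L^2(D)}^2$ respectively, so that no extension of $f$ outside $[0,1]$ is needed and the full nonlinear structure of \eqref{weak-sol1} is preserved throughout.
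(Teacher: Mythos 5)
Your proof is correct, but it takes a genuinely different route from the paper. The paper follows a regularization strategy (after Egger et al.): it introduces a $W^{2,\infty}$ approximation $\eta_\eps$ of the positive-part function, tests \eqref{weak-sol1} with $\eta_\eps'(u-1)$, integrates by parts so that $\eta_\eps''$ appears, and then shows that the residual convection and reaction integrals vanish as $\eps\to 0$ because $\eta_\eps''$ is supported on the thin set $\{1\le u\le 1+2\eps\}$ where $f(u)=f(u)-f(1)=O(\eps)$; this yields $\tfrac{d}{dt}\int_D (u-1)^+\,dx\le 0$ directly, with no Gronwall step. You instead test directly with $(u-1)^+$ and $-u^-$ and run a Gronwall argument on $\|(u-1)^+\|_{L^2(D)}^2$ and $\|u^-\|_{L^2(D)}^2$; the sign computations and the key use of $f(0)=f(1)=0$ together with the Lipschitz bound from \ref{H4} are all correct, and the removal term indeed has the right sign in both cases. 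What each approach buys: yours is shorter and avoids the $\eps$-limit entirely, and it is fully justified here because the regularity $u\in W^{1,p}((0,T);L^p(D))$ with $p>2$ makes $\partial_t u$ an honest function, so the identity $\int_D \partial_t u\,(u-1)^+\,dx=\tfrac12\tfrac{d}{dt}\|(u-1)^+\|_{L^2(D)}^2$ needs no further care. The paper's regularization is the more robust route when only $\partial_t u\in L^2((0,T);H^1(D)^*)$ is available and one wants bounded test functions $\eta_\eps'(u-1)$ and a clean chain rule for the nonsmooth function $s\mapsto s^+$; it also delivers the slightly stronger pointwise-in-time monotonicity of $\int_D(u-1)^+\,dx$ rather than an exponential bound. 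One small point worth making explicit in your write-up: the admissibility of the time-dependent test functions $(u(t,\cdot)-1)^+$ and $u(t,\cdot)^-$ in $H^1(D)$ for a.e.\ $t$, which you assert via the Sobolev chain rule, is exactly the step the paper's regularization is designed to sidestep, so you should state the regularity you are invoking when you differentiate $\|(u-1)^+\|_{L^2(D)}^2$ in time.
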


\begin{proof}

We follow \cite[Lemma 3.2]{Egger}. For every $\eps>0$ we consider the function $\eta_{\eps} \in W^{2, \infty}(\R)$ given by
	\[
	\eta_{\eps}(u)=
	\begin{cases}
	0 & \text{if } u \le 0 \\
	\displaystyle \frac{u^2}{4 \eps} & \text{if } 0< u \le 2\eps \\
	u- \eps & \text{if } u> 2\eps.
	\end{cases}
	\]
We observe that $\eta_{\eps}$ is a regularization of the function $u^+= \max\{u, 0\}$. Further, it is easily seen that
	\[
	\eta_{\eps}'(u)=
	\begin{cases}
	0 & \text{if } u \le 0 \\
	\displaystyle \frac{u}{2 \eps} & \text{if } 0< u \le 2\eps \\
	1 & \text{if } u> 2\eps.
	\end{cases}
	\quad \text{as well as} \quad
	\eta_{\eps}''(u)=
	\begin{cases}
	0 & \text{if } u \le 0 \\
	\displaystyle \frac{1}{2 \eps} & \text{if } 0< u \le 2\eps \\
	0 & \text{if } u> 2\eps.
	\end{cases}
	\]
 We want to show that $u \le 1$. Using \eqref{weak-sol1}, an integration by parts and the Young's inequality gives
	\begin{equation}
	\label{box1}
	\begin{split}
	& \frac{d}{dt} \into \eta_{\eps}(u-1) dx= \into \eta_{\eps}'(u-1) \partial_t u \; dx \\
	&= \into -\eta'_{\eps}(u-1) \nabla \cdot (-\nabla u+ f(u) \nabla V)+ \eta'_{\eps}(u-1) \alpha f(u)- \eta'_{\eps}(u-1) \beta u \; dx \\
	&= \into -\eta''_{\eps}(u-1) |\nabla u|^2+ \eta''_{\eps}(u-1) \nabla u \cdot \nabla V f(u)+ \eta'_{\eps}(u-1) \alpha f(u)- \eta'_{\eps}(u-1) \beta u \; dx \\
	& \le -\frac{1}{2} \into \eta''_{\eps}(u-1) |\nabla u|^2 dx+ \frac{1}{2} \into \eta''_{\eps}(u-1) f(u)^2 |\nabla V|^2 dx+ \into \eta'_{\eps}(u-1) \alpha f(u) dx\\
	& \qquad - \into \eta'_{\eps}(u-1) \beta u \; dx.
	\end{split}
	\end{equation}
We claim that the last three integrals on the right-hand side of \eqref{box1} vanish when $\eps \to 0$. Indeed, we consider the set
	\[
	\Om_{\eps}\coloneqq \{x \in \Om: \, 1 \le u(t, x) \le 1+ 2\eps\}
	\]
and use the fact that $f(1)= 0$ to have
\begin{equation}
	\label{box2}
	\begin{split}
	& \int_{D_{\eps}} \eta_{\eps}''(u-1) f(u)^2 |\nabla V|^2 dx= \int_{D_{\eps}} \eta_{\eps}''(u-1) (f(u)- f(1))^2 |\nabla V|^2 dx \\
	&= \int_{D_{\eps}} \eta''_{\eps}(u-1) f'(\xi) (u-1)^2 |\nabla V|^2 dx \le \|f'\|_{L^{\infty}(\R)}^2 \int_{D_{\eps}} \frac{1}{2 \eps} (2\eps)^2 |\nabla V|^2 dx\\
	& \le 2\eps \|f'\|_{L^{\infty}(\R)}^2 \|\nabla V\|_{L^2(D)}^2 \to 0,
	\end{split}
\end{equation}
as well as
\begin{equation}
	\label{box3}
	\int_{D_{\eps}} \eta'(u-1) \alpha f(u) dx \le 2\eps \|\alpha\|_{L^{\infty}(D)} \|f'\|_{L^{\infty}(D)} |D_{\eps}| \to 0,
\end{equation}
and finally
\begin{equation}\label{box4}
	\int_{D_{\eps}} \eta'_{\eps}(u-1) \beta u \; dx \le 2\eps \beta_{L^{\infty}(D)} |D_{\eps}| \to 0,
\end{equation}
as $\eps \to 0$. Taking into account \eqref{box2}, \eqref{box3}, and \eqref{box4}, from \eqref{box1} we have
	\[
	\frac{d}{dt} \into (u-1)^+ dx= \lim_{\eps \to 0} \into \eta_{\eps}(u-1) \; dx \le 0
	\]
which implies that
	\[
	\into (u-1)^+ dx \le \into (u^0- 1)^+ dx.
	\]
Taking hypothesis \ref{H1} into account, it must be $(u-1)^+= 0$, from which $u \le 1$. 

For the inequality $u \ge 0$ we use a similar argument, considering $u^-$ instead of $(u-1)^+$ and using $f(0)= 0$ instead of $f(1)= 0$.
\end{proof}

\end{document}